\documentclass[pdflatex,sn-mathphys-num]{sn-jnl}

\usepackage{graphicx}
\usepackage{amsmath,amssymb,amsfonts}
\usepackage{amsthm}
\usepackage{mathrsfs}
\usepackage{enumitem}
\usepackage[title]{appendix}
\usepackage{xcolor}
\usepackage{manyfoot}
\usepackage{booktabs}
\usepackage{listings}
\usepackage{lineno}

\theoremstyle{thmstyleone}
\newtheorem{theorem}{Theorem}
\newtheorem{lemma}[theorem]{Lemma}

\theoremstyle{thmstyletwo}

\newtheorem{remark}[theorem]{Remark}

\theoremstyle{thmstylethree}

\newcommand{\qbox}[1]{\quad\hbox{#1}\quad}

\newcommand{\R}{\mathbb{R}}
\newcommand{\X}{\mathcal{X}}
\newcommand{\Y}{\mathcal{Y}}

\begin{document}

\title{ Inertial Primal Dual Dynamics with Hessian-driven Damping for Saddle Point Problems }

\author{\fnm{Zepeng} \sur{Wang}}\email{zepeng.wang@rug.nl}

\author{\fnm{Juan} \sur{Peypouquet}}\email{j.g.peypouquet@rug.nl}

\affil{\orgdiv{Bernoulli Institute for Mathematics, Computer Science and Artificial Intelligence}, \orgname{University of Groningen}, \state{Groningen}, \country{The Netherlands}}

\abstract{ Featuring Hessian-driven damping, two inertial primal dual dynamical systems are proposed for solving smooth saddle point problems with bilinear coupling. For convex-concave functions, we establish a convergence rate $\mathcal{O}\left( \frac{1}{t^2} \right)$ for the primal dual gap; for strongly convex-strongly concave functions, we obtain an asymptotic rate $\mathcal{O}\left( \frac{1}{t^{\alpha-1}} \right)$ ($\alpha\ge 3$ is the damping parameter) without knowledge of the strong convexity parameters, and an accelerated linear convergence rate when the strong convexity parameters are known. As an application of the proposed inertial systems, we also consider the affinely constrained convex optimization problem, and develop an inertial system with Hessian-driven damping, which complements existing results.}

\keywords{ Saddle point problems, Inertial primal dual dynamics, Hessian-driven damping }


\maketitle

\section{Introduction}
In this paper, we consider the bilinearly coupled saddle point problem:
\begin{equation}\label{Prob: P}
\min_{x\in\X} \max_{y\in\Y}\mathcal{L}(x,y) = f(x) + \langle Ax,y \rangle - g(y),
\end{equation} 
where $\X$ and $\Y$ are real Hilbert spaces, $f:\X\to\R$ and $g:\Y\to\R$ are convex and continuously differentiable functions, and $A:\X\to\Y$ is a bounded linear operator. Our objective is to design an inertial primal dual dynamical system, based on the contemporary understanding of Nesterov's accelerated gradient method, whose trajectories approach solutions to \eqref{Prob: P} at a fast rate.

\subsection*{Dynamic Models for Nesterov's Accelerated Gradient Method}

Nesterov's Accelerated Gradient Method (AGM) \cite{Nesterov_1983} (see also \cite{Beck_2009,Kim_2016,Park_2023} for some variants) remains a popular algorithm to approximate minimizers of a convex and continuously differentiable function $\phi:\X\to\R$. It can be interpreted as a finite-difference discretization of 
\begin{equation}\label{E: AVD}\tag{AVD}
\ddot{x} + \frac{\alpha}{t}\dot{x} + \nabla \phi(x) = 0,
\end{equation}
with $\alpha\ge 3$, as shown in \cite{Su_2016}. For $\alpha\ge 3$, the function values along the trajectories converge at a rate $\mathcal O(\frac{1}{t^2})$, and every trajectory converges to a minimizer of $\phi$ \cite{Attouch_2018,Ryu_2025}. If $\alpha>3$, the convergence rate is $o(\frac{1}{t^2})$ \cite{May_2017} (see \cite{Attouch_2016} for the algorithmic counterpart). In recent years, the analysis of \eqref{E: AVD} has led to a deeper understanding of the behavior of AGM. In \cite{Attouch_2016_Hessian}, and following ideas from \cite{Alvarez_2002}, the authors propose to include a Hessian-driven damping term to \eqref{E: AVD}, leading to
\begin{equation}\label{E: AVD-H}\tag{AVD-H}
\ddot{x} + \frac{\alpha}{t}\dot{x} + \beta\nabla^2 \phi(x)\dot{x} + \nabla \phi(x) = 0,
\end{equation} 
with $\beta>0$. As observed in \cite{Alvarez_2002,Attouch_2016_Hessian}, this reduces the trajectory oscillations that are typical of \eqref{E: AVD}, and also ensures a rapid convergence of the gradient of $\phi$ to zero. Later, it was shown in \cite{Shi_2022} that a slight modification of \eqref{E: AVD-H}, namely
\begin{equation}\label{E: AVD-HR}\tag{AVD-HR}
\ddot{x} + \frac{\alpha}{t}\dot{x} + \beta\nabla^2 \phi(x)\dot{x} + \left(1+\frac{r}{t}\right)\nabla \phi(x) = 0,
\end{equation} 
with $r>0$, provides a high-resolution model that captures more closely the behavior of AGM (see also \cite{Shi_2024}). For example, if $\phi$ is strongly convex, both the trajectories of \eqref{E: AVD-HR} and the sequences generated by AGM converge linearly, in the sense of the function values \cite{Shi_2024}. This {\it is not} the case for the trajectories of \eqref{E: AVD}, for which a lower bound of $\mathcal O(\frac{1}{t^3})$ was shown in \cite{Su_2016}. Moreover, the discretization of \eqref{E: AVD-HR} that leads to AGM is more intuitive and explanatory than the one linking \eqref{E: AVD} to AGM. For a more thorough account of this topic, the reader is referred to \cite{Wang_2025_AVD}.

Now, if $\phi$ is strongly convex {\it with known parameter} $\mu>0$, the best convergence rates are obtained using an autonomous version of \eqref{E: AVD-H}, namely
\begin{equation}\label{E: HBF-H}\tag{HBF-H}
\ddot{x} + \alpha\dot{x} + \beta\nabla^2 \phi(x)\dot{x} + \gamma\nabla \phi(x) = 0,
\end{equation}
where $\alpha$ is chosen as a function of $\mu$, and  $\gamma>0$ (see \cite{Polyak_1964,Attouch_2000,Siegel_2019,Luo_2022,Aujol_2022} for the limiting case $\beta=0$, and \cite{Shi_2022,Attouch_2022,Wang_2025_AGM} for $\beta>0$). These correspond to Nesterov's Accelerated Gradient Method for Strongly Convex functions (AGM-SC) \cite{Nesterov_2004}.

\subsection*{Coupled Systems for Saddle Point Problems}

To solve problem \eqref{Prob: P}, an inertial primal dual system was proposed in \cite{Zeng_2020}:
\begin{equation}\label{E: bilinear_AVD}
\begin{aligned}
\ddot{x} + \frac{\alpha}{t}\dot{x} + \nabla_x \mathcal{L}(x,y+\theta t \dot{y}) &= 0,\\
\ddot{y} + \frac{\alpha}{t}\dot{y} - \nabla_y \mathcal{L}(x+\theta t\dot{x},y) &= 0.
\end{aligned}
\end{equation}
It was shown that a convergence rate $\mathcal{O}\left( \frac{1}{t^2} \right)$ can be guaranteed for the primal dual gap when $\alpha\ge 3$ and $\theta = \frac{1}{2}$ \cite{Zeng_2020}. This rate can be improved to be $o\left( \frac{1}{t^2} \right)$ when $\alpha>3$ and $\theta\in\left( \frac{1}{\alpha-1}, \frac{1}{2} \right)$ \cite{He_2026_bilinear}. Besides, the trajectory convergence is established in finite-dimensional space \cite{He_2026_bilinear}. Some variants of \eqref{E: bilinear_AVD} can be found in \cite{He_2024_bilinear,Ding_2025}. 

In case $g=\langle b, y \rangle$ with $b\in \Y$, problem \eqref{Prob: P} reduces to the affinely constrained convex optimization problem:
\begin{equation}\label{Prob: min_fx}
\min_{x\in\X} f(x),\quad
\text{subject to } Ax = b.
\end{equation}
Using the augmented Lagrangian $\mathcal{L}_\rho$ defined by 
\begin{equation}\label{E: Lm}
\mathcal{L}_\rho(x,y) = f(x) + \langle y, Ax-b \rangle + \frac{\rho}{2}\| Ax-b \|^2,
\end{equation}
where $\rho>0$, the problem can be solved by inertial primal dual dynamics \cite{Zeng_2023}:
\begin{equation}\label{E: affinely_con_AVD}
\begin{aligned}
\ddot{x} + \frac{\alpha}{t}\dot{x} + \nabla_x\mathcal{L}_\rho( x, y+\theta t \dot{y} ) &= 0,\\
\ddot{y} + \frac{\alpha}{t}\dot{y} - \nabla_y\mathcal{L}_\rho(x+\theta t\dot{x},y ) &= 0,
\end{aligned}
\end{equation} 
which provide a convergence rate $\mathcal{O}\left( \frac{1}{t^2} \right)$ for the primal dual gap, the feasibility measure and the function values, when $\alpha\ge 3$ and $\theta=\frac{1}{2}$ \cite{Zeng_2023,Radu_2021}. This rate can be improved to be $o\left( \frac{1}{t^2} \right)$ when $\alpha>3$ and $\theta\in\left( \frac{1}{\alpha-1}, \frac{1}{2} \right)$ \cite{He_2026_affine_dynamics}. The trajectory convergence was established for $\alpha>3$ in Hilbert space \cite{Radu_2021} and for $\alpha=3$ in finite-dimensional space \cite{He_2026_affine_dynamics}.  A time-scaled system was studied in \cite{Hulett_2023}. Other variants of system \eqref{E: affinely_con_AVD} can be found in \cite{He_2021,He_2022_TAC,He_2022_AUTO,Attouch_2022_ADMM}. These dynamical systems do not contain the full Hessian-driven damping, which may result in undesirable oscillations. Notice also that when discretization of \eqref{E: affinely_con_AVD}, some delicate approximations have to be made so as to obtain a convergent algorithm \cite{Radu_2023,He_2026_affine_algo}. Motivated by smoother trajectories and an intuitive discretization, many efforts have been made to develop an inertial dynamical system with Hessian-driven damping. A mixed-order primal dual system was proposed in \cite{He_2025}, and another system was developed from the perspective of maximally monotone operators \cite{Radu_2025_OGDA}. We notice that neither the dynamics in \cite{He_2025} nor those in \cite{Radu_2025_OGDA} can recover the algorithm in \cite{Radu_2023}. Extension of \eqref{E: affinely_con_AVD} to the Hessian-driven damping case whose direct discretization would give the algorithm in \cite{Radu_2023} is one of motivations of this paper.        

If $f$ and $g$ are strongly convex with known parameters $\mu_f,\mu_g>0$, a heavy-ball primal dual dynamical system was proposed in \cite{He_2025_PD_SC}:     
\begin{equation}\label{E: bilinear_HBF}
\begin{aligned}
\ddot{x} + 2\sqrt{\mu_f}\dot{x} + \nabla_x \mathcal{L}(x,y+\theta \dot{y}) &= 0,\\
\ddot{y} + 2\sqrt{\mu_g}\dot{y} - \nabla_y \mathcal{L}(x+\theta \dot{x},y) &= 0.
\end{aligned}
\end{equation}
Setting $\theta = \max\left( \frac{1}{\sqrt{\mu_f}}, \frac{1}{\sqrt{\mu_g}} \right)$, one can obtain an accelerated convergence rate $\mathcal{O}\left( e^{-\min(\sqrt{\mu_f},\sqrt{\mu_g}) t} \right)$ for the primal dual gap.

\subsection*{Our contribution}

We propose a set of primal dual dynamical systems with Hessian-driven damping, and whose trajectories approach solutions of Problem \eqref{Prob: P}. More precisely,
\begin{itemize}
\item We modify \eqref{E: bilinear_AVD} to include a Hessian-driven damping term in the spirit of \eqref{E: AVD-HR}. If $f$ and $g$ are convex, we establish a convergence rate of $\mathcal O(\frac{1}{\gamma t^2})$ for the Lagrangian primal dual gap $\mathcal{L}(x,y^*) - \mathcal{L}(x^*,y)$ as a measure of optimality.

\item In the strongly convex case (even when the parameters are unknown), the convergence rate becomes $\mathcal O(\frac{1}{\gamma t^{\alpha-1}})$. This result is made possible by inclusion of the Hessian-driven damping. All these results are presented in Section \ref{Sec: PD-AVD-H}.

\item If $f$ and $g$ are strongly convex with known coefficients $\mu_f,\mu_g$, we propose a Hessian-driven variant of \eqref{E: bilinear_HBF}, in line with \eqref{E: HBF-H}, for which the convergence rate is $\mathcal O(e^{\sqrt{\gamma_0\mu}t})$, where $\mu=\min\{\mu_f,\mu_g\}$, and $\gamma_0\sim\gamma$. This is done in Section \ref{Sec: PD-HBF-H}.

\item For affinely constrained optimization problems, we develop a system involving the augmented Lagrangian. This allows us to obtain a convergence rate of $\mathcal O(\frac{1}{\gamma t^2})$ for the augmented Lagrangian gap $\mathcal{L}_\rho\left(x(t),y^*\right) - \mathcal{L}_\rho(x^*,y^*)$, as well as the function values $|f\left( x(t) \right) - f(x^*) |$ and the feasibility gap $\|Ax(t)-b\|$. These results are discussed in Section \ref{Sec: affine convex}.
\end{itemize}

In all that follows, we assume that $f:\X\to\R$ and $g:\Y\to\R$ are convex and twice continuously differentiable. A reduction to the first order in order to deal with the nonsmooth case can be found in \cite{Attouch_2016_Hessian}.

\section{Inertial Dynamics I: The Convex Case}\label{Sec: PD-AVD-H}
In this section, we propose to solve \eqref{Prob: P} using the following inertial primal dual dynamics with Hessian-driven damping:
\begin{equation}\label{System: PD_system_AVD}\tag{PD-AVD-H}
\left\{
\begin{array}{lcl}
\ddot{x} + \frac{\alpha}{t}\dot{x} + \beta_f\nabla_{xx}\mathcal{L}(x,y)\dot{x} + \left( \gamma + \frac{r}{t} \right) \nabla_x \mathcal{L}(x,y+\theta t \dot{y}) &=& 0,\\[4pt]
\ddot{y} + \frac{\alpha}{t}\dot{y} - \beta_g\nabla_{yy}\mathcal{L}(x,y)\dot{y} - \left( \gamma + \frac{r}{t} \right) \nabla_y \mathcal{L}(x+\theta t\dot{x},y) &=& 0,
\end{array} 
\right.
\end{equation} 
where $\alpha\ge 3$ and $\beta_f,\beta_g,\gamma,r,\theta>0$, with $t\ge t_0$ for some $t_0>0$.

Setting $\beta_f=\beta_g=r=0$ and $\gamma=1$, \eqref{System: PD_system_AVD} can recover the systems in \cite{Zeng_2020,Ding_2025,He_2026_bilinear} and corresponds to a special case of the system in \cite{He_2024_bilinear}. But due to its inclusion of Hessian-driven damping $\beta_f\nabla_{xx}\mathcal{L}(x,y)\dot{x}$ and $\beta_g\nabla_{yy}\mathcal{L}(x,y)\dot{y}$, system \eqref{System: PD_system_AVD} is different from those in \cite{Zeng_2020,Ding_2025,He_2024_bilinear,He_2026_bilinear}. Thanks to inclusion of these two terms, as one will see in Theorem \ref{Thm: AVD_SC}, we are able to derive new convergence results in the strongly convex case.

\subsection{The Primal Dual Gap}\label{Subsec: F}
To simplify the notation, we define $s=(x,y)$, and denote the primal dual gap by 
\begin{equation}\label{E: F}
F(s):= \mathcal{L}(x,y^*) - \mathcal{L}(x^*,y) = f(x) - f(x^*) + g(y) - g(y^*) + \langle Ax, y^* \rangle - \langle Ax^*,y\rangle.
\end{equation}
Notice that $F(s)\ge 0$ for all $x\in\X$ and $y\in\Y$, and 
$$ \nabla F(s) = \begin{bmatrix}
\nabla f(x) + A^* y^*\\[2pt]
\nabla g(y) - Ax^*
\end{bmatrix}.$$ 
As a result, \eqref{System: PD_system_AVD} can be rewritten as
\begin{equation}\label{E: PD_system_AVD}
\ddot{s} + \frac{\alpha}{t} \dot{s} + \begin{bmatrix}
\beta_f \nabla^2 f(x) \dot{x}\\[2pt]
\beta_g \nabla^2 g(y) \dot{y}
\end{bmatrix} + \left( \gamma + \frac{r}{t} \right) \nabla F + \left( \gamma + \frac{r}{t} \right) \begin{bmatrix}
A^* ( \theta t \dot{y} + y - y^* ) \\[2pt]
- A( \theta t \dot{x} + x - x^* )
\end{bmatrix} = 0.
\end{equation}

The following result shall be useful in the forthcoming analysis.

\begin{lemma}\label{Lem: dF}
Consider the primal dual gap $F(s)$ defined by \eqref{E: F}. Then, we have
$$ \langle \nabla F, s-s^* \rangle = F + [ \langle \nabla f(x), x-x^* \rangle - \left( f(x) - f(x^*) \right) ]
  + [ \langle \nabla g(y), y-y^* \rangle - \left( g(y) - g(y^*) \right) ]. $$
\end{lemma}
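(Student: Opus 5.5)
The identity is a direct computation, and I will simply expand both sides and compare. The only ingredients are the explicit gradient $\nabla F(s) = (\nabla f(x) + A^*y^*,\ \nabla g(y) - Ax^*)$ given above and the definition \eqref{E: F} of $F$.

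First I expand the left-hand side:
\begin{align*}
\langle \nabla F, s-s^*\rangle
&= \langle \nabla f(x), x-x^*\rangle + \langle A^*y^*, x-x^*\rangle + \langle \nabla g(y), y-y^*\rangle - \langle Ax^*, y-y^*\rangle .
\end{align*}
I then rewrite the two bilinear terms using the adjoint relation $\langle A^*y^*, x-x^*\rangle = \langle A(x-x^*), y^*\rangle$. Their sum becomes
\[
\langle Ax, y^*\rangle - \langle Ax^*, y^*\rangle - \langle Ax^*, y\rangle + \langle Ax^*, y^*\rangle = \langle Ax, y^*\rangle - \langle Ax^*, y\rangle ,
\]
since the two $\langle Ax^*,y^*\rangle$ terms cancel.

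Next I expand the right-hand side. There, $F$ contributes $f(x)-f(x^*)+g(y)-g(y^*)+\langle Ax,y^*\rangle-\langle Ax^*,y\rangle$. The two bracketed terms subtract exactly $f(x)-f(x^*)$ and $g(y)-g(y^*)$ and add $\langle\nabla f(x),x-x^*\rangle+\langle\nabla g(y),y-y^*\rangle$. What remains on the right is therefore $\langle\nabla f(x),x-x^*\rangle+\langle\nabla g(y),y-y^*\rangle+\langle Ax,y^*\rangle-\langle Ax^*,y\rangle$, which matches the left-hand side term by term.

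There is no real obstacle here. The only point needing care is the bookkeeping of the cancelling $\langle Ax^*,y^*\rangle$ terms and the correct use of the adjoint. Notably, no optimality condition on $(x^*,y^*)$ is needed, because the identity is purely algebraic. Convexity is also not used; it only becomes relevant later, when one observes that the bracketed terms are nonnegative.
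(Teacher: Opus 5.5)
Your proof is correct and follows essentially the same route as the paper. You expand $\langle \nabla F, s-s^*\rangle$ with the explicit gradient, reduce the bilinear terms to $\langle Ax, y^*\rangle - \langle Ax^*, y\rangle$ via the adjoint and the cancellation of $\langle Ax^*,y^*\rangle$, and match with the definition of $F$; the paper does the cancellation silently and adds and subtracts $f(x)-f(x^*)$ and $g(y)-g(y^*)$ on the left rather than expanding the right-hand side, which is only a cosmetic difference.
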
 

\begin{proof}
With \eqref{E: F} in mind, we have
\begin{align*}
\langle \nabla F, s-s^* \rangle
&= \left\langle \begin{bmatrix}
\nabla f(x) + A^* y^*\\[2pt]
\nabla g(y) - Ax^*
\end{bmatrix}, \begin{bmatrix}
x-x^*\\[2pt]
y-y^*
\end{bmatrix}  \right\rangle \\
&= \langle \nabla f(x), x-x^* \rangle + \left\langle A^* y^*, x \right\rangle + \langle \nabla g(y), y-y^* \rangle - \left\langle Ax^*, y \right\rangle\\
&= [ \langle \nabla f(x), x-x^* \rangle - \left( f(x) - f(x^*) \right) ] + f(x) - f(x^*) + \left\langle A^* y^*, x \right\rangle \\
&\quad + [ \langle \nabla g(y), y-y^* \rangle - \left( g(y) - g(y^*) \right) ] + g(y) - g(y^*) - \left\langle Ax^*, y \right\rangle \\
&= F + [ \langle \nabla f(x), x-x^* \rangle - \left( f(x) - f(x^*) \right) ] 
  + [ \langle \nabla g(y), y-y^* \rangle - \left( g(y) - g(y^*) \right) ],      
\end{align*}
and the proof is complete.
\end{proof}

\begin{remark}
    In particular, $\langle \nabla F, s-s^* \rangle\ge F\ge 0$.
\end{remark}

\subsection{Energy Estimations}
Our analysis centers around the energy function $W:[t_0,\infty)\to\R$, defined by
\begin{equation}\label{E: W_AVD}
\begin{aligned}
W(t)
&= \frac{1}{2}\| t\dot{s} + (\alpha-1)(s-s^*) \|^2 + t(\gamma t + r)F(s) \\
&\quad + (\alpha-1)\beta_f t \left[\langle \nabla f(x), x-x^* \rangle - \left( f(x) - f(x^*) \right) \right] \\ 
&\quad + (\alpha-1)\beta_g t \left[ \langle \nabla g(y), y-y^* \rangle - \left( g(y) - g(y^*) \right) \right].
\end{aligned}
\end{equation}
Notice that $W(t)\ge 0$ for all $t\ge t_0$, since $f$ and $g$ are convex and $F(s)\ge 0$. 

\begin{remark}
Based on the systems proposed in \cite{Zeng_2020,Ding_2025,He_2024_bilinear,He_2026_bilinear}, it is natural to include in the dynamics the Hessian-driven damping $\beta_f\nabla_{xx}\mathcal{L}(x,y)\dot{x}$ and $\beta_g\nabla_{yy}\mathcal{L}(x,y)\dot{y}$. But at the same time, a convergence proof for the resultant system becomes more difficult. The last two terms in \eqref{E: W_AVD} and Lemma \ref{Lem: dF} are the two key enabling components for the convergence analysis.   
\end{remark}

\begin{remark} \label{R:smooth}
Recall the optimality condtion $\nabla F(s^*)=0$. If $f$ is $L_f$-smooth, then $W(t)\ge \frac{(\alpha-1)\beta_ft}{2L_f}\|\nabla f(x)+A^*y^*\|^2$. If $g$ is $L_g$-smooth, then $W(t)\ge \frac{(\alpha-1)\beta_gt}{2L_g}\|\nabla g(y)-Ax^*\|^2$.    
\end{remark}

Using \eqref{E: W_AVD}, we have the following:

\begin{lemma}\label{Lem: W_bound_AVD}
Let $W$ be defined by \eqref{E: W_AVD}. Set $\theta = \frac{1}{\alpha-1}$. Then, we have
\begin{align*}
\dot{W}(t)
&= - \beta_f t^2 \langle \nabla^2 f(x)\dot{x}, \dot{x} \rangle 
   - \beta_g t^2 \langle \nabla^2 g(y)\dot{y}, \dot{y} \rangle 
   - [ (\alpha-3)\gamma t + (\alpha-2)r ] F \\
&\quad - (\alpha-1)[ \gamma t + r-\beta_f  ] \left[\langle \nabla f(x), x-x^* \rangle - \left( f(x) - f(x^*) \right) \right] \\
&\quad - (\alpha-1)[ \gamma t + r-\beta_g ] \left[ \langle \nabla g(y), y-y^* \rangle - \left( g(y) - g(y^*) \right) \right].
\end{align*}  
\end{lemma}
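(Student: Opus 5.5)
We differentiate each of the four pieces of $W$ in \eqref{E: W_AVD} along trajectories, substitute the dynamics in the form \eqref{E: PD_system_AVD} to eliminate $\ddot{s}$, and then collect terms. Write $D_f(x)=\langle \nabla f(x), x-x^* \rangle - ( f(x) - f(x^*) )$ and $D_g(y)$ similarly. For the kinetic part, set $v=t\dot{s}+(\alpha-1)(s-s^*)$. Then $\dot v = t\ddot{s}+\alpha\dot{s}$, and \eqref{E: PD_system_AVD} gives
\[
\dot v = -t\begin{bmatrix}\beta_f\nabla^2 f(x)\dot{x}\\ \beta_g\nabla^2 g(y)\dot{y}\end{bmatrix} - t\left(\gamma+\frac{r}{t}\right)\nabla F - (\gamma t+r)\begin{bmatrix}A^*(\theta t\dot{y}+y-y^*)\\ -A(\theta t\dot{x}+x-x^*)\end{bmatrix}.
\]
We then take the inner product of $\dot v$ with $v$ and treat the three resulting groups separately.

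\emph{Bilinear coupling.} Pairing the coupling block with $v$ gives $(\gamma t+r)$ times
\[
\langle A^*(\theta t\dot{y}+y-y^*), t\dot{x}+(\alpha-1)(x-x^*)\rangle - \langle A(\theta t\dot{x}+x-x^*), t\dot{y}+(\alpha-1)(y-y^*)\rangle .
\]
After expanding, the $\langle A\dot x,\dot y\rangle$ and $\langle A(x-x^*),y-y^*\rangle$ terms cancel. The cross terms cancel exactly when $(\alpha-1)\theta t = t$, i.e.\ $\theta=\frac{1}{\alpha-1}$. This is the step that forces the choice of $\theta$, and I will check it carefully.

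\emph{Gradient of $F$.} Pairing $-(\gamma t+r)\nabla F$ with $v$ gives $-(\gamma t+r)t\langle\nabla F,\dot s\rangle - (\alpha-1)(\gamma t+r)\langle \nabla F,s-s^*\rangle$. The first term cancels against part of $\frac{d}{dt}[t(\gamma t+r)F] = (2\gamma t+r)F + t(\gamma t+r)\langle\nabla F,\dot s\rangle$. For the second we apply Lemma \ref{Lem: dF}, which turns it into $-(\alpha-1)(\gamma t+r)[F+D_f+D_g]$. Combined with the leftover $(2\gamma t+r)F$, the coefficient of $F$ is $-[(\alpha-3)\gamma t+(\alpha-2)r]$, as claimed.

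\emph{Hessian terms.} Pairing the Hessian block with $v$ gives $-\beta_f t^2\langle\nabla^2 f\dot x,\dot x\rangle - (\alpha-1)\beta_f t\langle \nabla^2 f(x)\dot x, x-x^*\rangle$, and the analogue for $g$. Next, $\frac{d}{dt}D_f(x) = \langle \nabla^2 f(x)\dot x, x-x^*\rangle + \langle\nabla f,\dot x\rangle - \langle \nabla f,\dot x\rangle = \langle \nabla^2 f(x)\dot x, x-x^*\rangle$. Hence
\[
\frac{d}{dt}\big[(\alpha-1)\beta_f tD_f\big] = (\alpha-1)\beta_f D_f + (\alpha-1)\beta_f t\langle\nabla^2 f\dot x,x-x^*\rangle ,
\]
and the second term exactly cancels the mixed Hessian term from the kinetic part. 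This is the mechanism the Remark refers to. The surviving $(\alpha-1)\beta_f D_f$ combines with $-(\alpha-1)(\gamma t+r)D_f$ to give $-(\alpha-1)[\gamma t+r-\beta_f]D_f$, and the same holds for $g$.

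Summing all contributions yields the stated identity. The main obstacle is the bookkeeping in the bilinear coupling step, where the cross terms must cancel under $\theta=\frac{1}{\alpha-1}$. The Hessian cancellation is straightforward once the derivative of $D_f$ has been computed; it requires $f,g\in C^2$, which is assumed.
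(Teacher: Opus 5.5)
Your proposal is correct and follows essentially the same route as the paper: differentiate the four pieces of $W$ and substitute \eqref{E: PD_system_AVD}, cancel the $\langle\nabla F,\dot s\rangle$ and mixed Hessian terms against the derivatives of the remaining pieces, and finish with Lemma \ref{Lem: dF}. The only cosmetic difference is in the coupling block: the paper writes $t\dot s+(\alpha-1)(s-s^*)=\frac{1}{\theta}(\theta t\dot s+s-s^*)$ and uses the skew structure $\langle (u,w),(A^*w,-Au)\rangle=0$, whereas you expand the cross terms directly.
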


\begin{proof}
Denote $W(t) := \sum_{i=1}^4 W_i(t)$, with $W_i$ $(i=1,2,3,4)$ being the terms on the right-hand side of \eqref{E: W_AVD} in the order in which they appear. We begin by using \eqref{E: PD_system_AVD}, to obtain
\begin{align*}
\dot{W}_1(t)
&= \langle t\dot{s} + (\alpha-1)(s-s^*), t\ddot{s} + \alpha\dot{s} \rangle \\
&= -\left\langle t\dot{s} + (\alpha-1)(s-s^*), \begin{bmatrix}
   \beta_f t \nabla^2 f(x) \dot{x}\\[2pt]
   \beta_g t \nabla^2 g(y) \dot{y}
   \end{bmatrix} \right\rangle 
 - ( \gamma t + r ) \langle \nabla F, t\dot{s} + (\alpha-1)(s-s^*) \rangle \\
&\quad - ( \gamma t + r ) \left\langle t\dot{s} + (\alpha-1)(s-s^*), \begin{bmatrix}
A^* ( \theta t \dot{y} + y - y^* ) \\[2pt]
- A( \theta t \dot{x} + x - x^* )
\end{bmatrix} \right\rangle.
\end{align*}
Since $\theta = \frac{1}{\alpha-1}$, we have
\begin{align*}
&\quad \left\langle t\dot{s} + (\alpha-1)(s-s^*), \begin{bmatrix}
A^* ( \theta t \dot{y} + y - y^* ) \\[2pt]
- A( \theta t \dot{x} + x - x^* )
\end{bmatrix} \right\rangle \\ 
&= \frac{1}{\theta}\left\langle \begin{bmatrix}
\theta t \dot{x} + x-x^*\\[2pt]
\theta t \dot{y} + y-y^*
\end{bmatrix}, \begin{bmatrix}
A^* ( \theta t \dot{y} + y - y^* ) \\[2pt]
- A( \theta t \dot{x} + x - x^* )
\end{bmatrix} \right\rangle = 0.
\end{align*}
As a result,
\begin{align*}
\dot{W}_1(t)
&= - \beta_f t^2 \langle \nabla^2 f(x)\dot{x}, \dot{x} \rangle 
   - \beta_g t^2 \langle \nabla^2 g(y)\dot{y}, \dot{y} \rangle  
   - (\alpha-1)\beta_f t \langle \nabla^2 f(x)\dot{x}, x-x^* \rangle \\
&\quad - (\alpha-1)\beta_g t \langle \nabla^2 g(y)\dot{x}, y-y^* \rangle 
   - t(\gamma t + r) \langle \nabla F, \dot{s} \rangle 
   - (\alpha-1)(\gamma t + r) \langle \nabla F, s-s^* \rangle. 
\end{align*}
Likewise, we obtain
\begin{align*}
\dot{W}_2(t) &= (2\gamma t + r) F + t(\gamma t + r)\langle \nabla F, \dot{s} \rangle,\\
\dot{W}_3(t) &= (\alpha-1)\beta_f t \langle \nabla^2 f(x) \dot{x}, x-x^* \rangle 
 + (\alpha-1)\beta_f \left[\langle \nabla f(x), x-x^* \rangle - \left( f(x) - f(x^*) \right) \right],\\
\dot{W}_4(t) &= (\alpha-1)\beta_g t \langle \nabla^2 g(y)\dot{y}, y-y^* \rangle 
 + (\alpha-1)\beta_g \left[ \langle \nabla g(y), y-y^* \rangle - \left( g(y) - g(y^*) \right) \right]. 
\end{align*}
Summing up these equations, we obtain
\begin{align*}
\dot{W}(t)
&= - \beta_f t^2 \langle \nabla^2 f(x)\dot{x}, \dot{x} \rangle
   - \beta_g t^2 \langle \nabla^2 g(y)\dot{y}, \dot{y} \rangle 
   + (2\gamma t + r) F 
 - (\alpha-1)(\gamma t + r) \langle \nabla F, s-s^* \rangle \\
&\quad + (\alpha-1)\beta_f \left[\langle \nabla f(x), x-x^* \rangle - \left( f(x) - f(x^*) \right) \right] \\
&\quad + (\alpha-1)\beta_g \left[ \langle \nabla g(y), y-y^* \rangle - \left( g(y) - g(y^*) \right) \right],
\end{align*}
which, combined with Lemma \ref{Lem: dF}, gives the desired result.
\end{proof}

\begin{remark}\label{Rem: W_bound_AVD}
Since $f$ and $g$ are convex, we have
$$\langle \nabla^2 f(x)\dot{x}, \dot{x} \rangle \ge 0, \qbox{} \langle \nabla f(x), x-x^* \rangle - \left( f(x) - f(x^*) \right) \ge 0, $$
and
$$\langle \nabla^2 g(y)\dot{y}, \dot{y} \rangle\ge 0, \qbox{} \langle \nabla g(y), y-y^* \rangle - \left( g(y) - g(y^*) \right) \ge 0. $$
If $\alpha\ge 3$ and $r\ge\max(\beta_f,\beta_g)$, then $W(t)$ is nonincreasing, whence $W(t)\le W(t_0)$ for all $t\ge t_0$.
\end{remark}

\subsection{Convergence Analysis}\label{Subsec: convergence_analysis_PD_AVD-H}

\begin{theorem} \label{T:convex}
Let $f:\X\to\R$ and $g:\Y\to\R$ be convex and continuously differentiable. Let $(x,y):[t_0,\infty)\to\X\times\Y$ be a solution of the system \eqref{System: PD_system_AVD}, where
$$\alpha\ge 3, \qbox{}\beta_f,\beta_g,\gamma>0, \qbox{} r\ge\max(\beta_f,\beta_g),\qbox{and} \theta = \frac{1}{\alpha-1}. $$
Then for every $t\ge t_0$, we have:
\begin{enumerate}[label=(\roman*), itemsep=0.7em, leftmargin=12mm]
\item The trajectory $(x,y)$ is bounded.

\item $\| \dot{x}(t) \|^2 + \| \dot{y}(t) \|^2   = \mathcal{O}\left( \frac{1}{t^2} \right)$. 

\item $\mathcal{L}(x(t),y^*) - \mathcal{L}(x^*,y(t)) =\mathcal{O}\left( \frac{1}{t^2} \right) 
$. 
\end{enumerate} 
\end{theorem}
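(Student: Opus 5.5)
The plan is to exploit the energy function $W$ from \eqref{E: W_AVD}, which by Remark~\ref{Rem: W_bound_AVD} is nonincreasing under the stated hypotheses $\alpha\ge 3$, $r\ge\max(\beta_f,\beta_g)$ and $\theta=\frac{1}{\alpha-1}$. Hence $W(t)\le W(t_0)$ for all $t\ge t_0$. Every summand of $W$ is nonnegative: the squared norm trivially, $F\ge 0$, and the two Bregman-type brackets by convexity. So each summand is individually bounded by $W(t_0)$.

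Item (iii) follows immediately. From $t(\gamma t+r)F(s(t))\le W(t_0)$ we get
$$\mathcal{L}(x(t),y^*)-\mathcal{L}(x^*,y(t)) = F(s(t))\le \frac{W(t_0)}{t(\gamma t+r)}\le \frac{W(t_0)}{\gamma t^2},$$
which is the $\mathcal{O}(1/t^2)$ rate (even $\mathcal{O}(1/(\gamma t^2))$).

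For (i) and (ii) I use the standard trick for \eqref{E: AVD}-type energies. Set $u(t)=s(t)-s^*$ and note that $\|t\dot u+(\alpha-1)u\|\le \sqrt{2W(t_0)}=:C$. Define $v(t)=t^{\alpha-1}u(t)$, so that $\dot v = t^{\alpha-2}\bigl(t\dot u+(\alpha-1)u\bigr)$ and therefore $\|\dot v(t)\|\le C t^{\alpha-2}$. Integrating from $t_0$ gives
$$\|v(t)\|\le \|v(t_0)\|+\frac{C}{\alpha-1}\bigl(t^{\alpha-1}-t_0^{\alpha-1}\bigr),$$
using $\alpha\ge3>1$. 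Dividing by $t^{\alpha-1}$ yields $\|u(t)\|\le t_0^{\alpha-1}\|u(t_0)\|/t^{\alpha-1}+C/(\alpha-1)$, which is bounded; this proves (i).

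Then, by the triangle inequality,
$$t\|\dot s(t)\|\le \|t\dot s+(\alpha-1)(s-s^*)\|+(\alpha-1)\|s-s^*\|\le C+(\alpha-1)\sup_{\tau\ge t_0}\|u(\tau)\|,$$
so $\|\dot x\|^2+\|\dot y\|^2=\|\dot s\|^2=\mathcal{O}(1/t^2)$, which is (ii).

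The one place needing care is that all the real work is already done in Lemma~\ref{Lem: W_bound_AVD} and Remark~\ref{Rem: W_bound_AVD}. The only subtlety I anticipate is making sure the monotonicity there uses exactly the sign conditions $\alpha\ge3$ and $r\ge\max(\beta_f,\beta_g)$, so that every coefficient in $\dot W$ is nonpositive for all $t\ge t_0>0$. In particular $(\alpha-1)(\gamma t+r-\beta_f)\ge (\alpha-1)\gamma t\ge0$, and similarly with $\beta_g$.

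The integration argument for boundedness is routine. The statement assumes only $C^1$ regularity while the dynamics use Hessians, so I would simply work under the standing $C^2$ assumption of the paper for differentiating $W$.
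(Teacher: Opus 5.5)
Your proof is correct and follows the paper's argument: monotonicity of $W$ via Remark~\ref{Rem: W_bound_AVD}, the integrating factor $t^{\alpha-1}$ for boundedness, the triangle inequality for the velocity, and the bound $F\le W(t_0)/(t(\gamma t+r))$ for the gap. The only cosmetic difference is in (i): you integrate $\frac{d}{dt}\bigl(t^{\alpha-1}(s-s^*)\bigr)$ in norm, whereas the paper drops $t^2\|\dot s\|^2$ and integrates the scalar inequality for $t^{\alpha-1}\|s-s^*\|^2$, and both routes give the same bound.
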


\begin{proof}
(i) By Remark \ref{Rem: W_bound_AVD}, we have $W(t)\le W(t_0)$ for all $t\ge t_0$. Using \eqref{E: W_AVD}, we obtain
\begin{equation}\label{E: bound_temp}
\| t\dot{s} + (\alpha-1)(s-s^*) \|^2 \le 2W(t) \le 2W(t_0).
\end{equation}
Developing the terms in the left side, we arrive at
$$ t^2\| \dot{s} \|^2 + (\alpha-1)^2 \| s-s^* \|^2 + 2(\alpha-1)t\langle s-s^*, \dot{s} \rangle \le 2W(t_0), $$
so that
$$ (\alpha-1)\| s-s^* \|^2 + 2t \langle s-s^*, \dot{s} \rangle \le \frac{2W(t_0)}{\alpha-1}. $$ 
This is equivalent to
$$ \frac{d}{dt}\left[ t^{\alpha-1}\| s-s^* \|^2 \right] \le \frac{2W(t_0) t^{\alpha-2}}{\alpha-1}. $$
Integrating from $t_0$ to $t>t_0$ gives
$$ t^{\alpha-1}\| s-s^* \|^2 - t_0^{\alpha-1}\| s_0-s^* \|^2 \le \frac{2W(t_0)}{(\alpha-1)^2}\left( t^{\alpha-1} - t_0^{\alpha-1} \right).  $$
As a result,
$$ \| s-s^* \|^2 \le \frac{2W(t_0)}{(\alpha-1)^2} + \| s_0-s^* \|^2 < \infty, $$
which means that the trajectory is bounded. 

(ii) Using \eqref{E: bound_temp}, we obtain
\begin{align*}
    t\| \dot{s} \| & \le \| t\dot{s} + (\alpha-1)(s-s^*) \| + (\alpha-1)\| s-s^* \| \\
    & \le \sqrt{2W(t_0)} + \sqrt{2W(t_0)+(\alpha-1)^2\|s_0-s^*\|^2},
\end{align*}
and the claim follows.

(iii) Since $W(t) \le W(t_0)$ for all $t\ge t_0$, we have
$$ F(s) \le \frac{W(t)}{t(\gamma t + r)} \le \frac{W(t_0)}{t(\gamma t + r)}. $$ 
Recalling \eqref{E: F}, we obtain the desired result.
\end{proof}

\begin{remark}
The trajectory boundedness and the $\mathcal{O}\left(\frac{1}{t}\right)$ decay of the velocity still hold when $\alpha=3$, which complements the result in \cite{Zeng_2020}.
\end{remark}

\begin{remark} \label{R:convergent_gradient}
    Combining Theorem \ref{T:convex} with Remark \ref{R:smooth}, we deduce that if $f$ is smooth, then $\|\nabla f(x(t))+A^*y^*\|^2=\mathcal{O}\left( \frac{1}{t} \right)$. Similarly, if $g$ is smooth, then $\|\nabla g(y(t))-Ax^*\|^2=\mathcal{O}\left( \frac{1}{t} \right)$. This also holds if $f$ (respectively $g$) is {\it locally} smooth, in the sense that $\nabla f$ (respectively $\nabla g$) is Lipschitz continuous on every ball. Thanks to the continuity of the corresponding Hessian, the latter is true under no additional assumptions if $\mathcal X$ (respectively $\mathcal Y$) is finite dimensional.
\end{remark}

\subsection{Adaptivity of the Rate to Strong Convexity}

In what follows, we establish faster convergence rates for \eqref{System: PD_system_AVD} in case $f$ and $g$ are strongly convex, with possibly unknown parameters. To this end, we begin with the following estimation:

\begin{lemma}\label{Lem: W_bound_AVD_SC}
Let $f:\X\to\R$ and $g:\Y\to\R$ be strongly convex with parameters $\mu_f,\mu_g>0$. Let $W$ be defined by \eqref{E: W_AVD} with $\theta = \frac{1}{\alpha-1}$. If $\alpha\ge 3$, then we have
\begin{align*}
&\quad t\dot{W} + (\alpha-3)W \\
&\le -\left[ \min(\mu_f\beta_f,\mu_g\beta_g) t - (\alpha-3) \right] t^2 \| \dot{s} \|^2\\
& -(\alpha-1)t \left[ \gamma t + r-(\alpha-4)\beta_f - \tfrac{2(\alpha-1)(\alpha-3)}{\mu_f t} \right] \left[\langle \nabla f(x), x-x^* \rangle - \left( f(x) - f(x^*) \right) \right] \\
& -(\alpha-1)t \left[ \gamma t + r-(\alpha-4)\beta_g - \tfrac{2(\alpha-1)(\alpha-3)}{\mu_g t} \right] \left[ \langle \nabla g(y), y-y^* \rangle - \left( g(y) - g(y^*) \right) \right]. 
\end{align*}
\end{lemma}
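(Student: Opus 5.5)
The plan is to start from the exact identity of Lemma~\ref{Lem: W_bound_AVD}, multiply it by $t$, add $(\alpha-3)W$ using the definition \eqref{E: W_AVD}, and then use strong convexity in two ways. The first use bounds the Hessian terms from below. The second converts the distance $\|s-s^*\|^2$ hidden in the first term of $W$ into the Bregman-type quantities
$$D_f:=\langle \nabla f(x), x-x^* \rangle - \left( f(x) - f(x^*) \right),\qquad D_g:=\langle \nabla g(y), y-y^* \rangle - \left( g(y) - g(y^*) \right).$$
Throughout, $\theta=\frac{1}{\alpha-1}$ and $\alpha\ge 3$, so $(\alpha-3)\ge 0$ multiplies only nonnegative quantities.

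\textbf{Step 1: two strong convexity inequalities.} Strong convexity of $f$ gives $\langle \nabla^2 f(x)\dot x,\dot x\rangle\ge \mu_f\|\dot x\|^2$. It also gives $f(x^*)\ge f(x)+\langle\nabla f(x),x^*-x\rangle+\frac{\mu_f}{2}\|x-x^*\|^2$, which rearranges to $D_f\ge \frac{\mu_f}{2}\|x-x^*\|^2$. The analogous statements hold for $g$. Hence
$$-\beta_f t^3\langle \nabla^2 f(x)\dot x,\dot x\rangle-\beta_g t^3\langle \nabla^2 g(y)\dot y,\dot y\rangle\le -\min(\mu_f\beta_f,\mu_g\beta_g)\,t^3\|\dot s\|^2 .$$

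\textbf{Step 2: bound the kinetic part of $(\alpha-3)W$.} By $\|a+b\|^2\le 2\|a\|^2+2\|b\|^2$,
$$\tfrac{\alpha-3}{2}\| t\dot{s} + (\alpha-1)(s-s^*) \|^2\le (\alpha-3)t^2\|\dot s\|^2+(\alpha-3)(\alpha-1)^2\left(\|x-x^*\|^2+\|y-y^*\|^2\right).$$
Using Step 1, the right-hand side is at most
$$(\alpha-3)t^2\|\dot s\|^2+\tfrac{2(\alpha-1)^2(\alpha-3)}{\mu_f}D_f+\tfrac{2(\alpha-1)^2(\alpha-3)}{\mu_g}D_g .$$
Writing $\tfrac{2(\alpha-1)^2(\alpha-3)}{\mu_f}=(\alpha-1)t\cdot\tfrac{2(\alpha-1)(\alpha-3)}{\mu_f t}$ produces exactly the $\tfrac{2(\alpha-1)(\alpha-3)}{\mu_f t}$ correction in the statement. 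Together with Step 1, it also produces the factor $-[\min(\mu_f\beta_f,\mu_g\beta_g)t-(\alpha-3)]t^2\|\dot s\|^2$.

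\textbf{Step 3: collect the remaining terms.} For the gap, $t\dot W$ contributes $-[(\alpha-3)\gamma t^2+(\alpha-2)rt]F$ and $(\alpha-3)W$ contributes $(\alpha-3)t(\gamma t+r)F$. These sum to $-rtF\le 0$, which is discarded.

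For $D_f$, $t\dot W$ contributes $-(\alpha-1)t(\gamma t+r-\beta_f)D_f$ and $(\alpha-3)W_3$ contributes $(\alpha-3)(\alpha-1)\beta_f tD_f$. Combining these with the term from Step 2 gives a coefficient of the form $-(\alpha-1)t[\gamma t+r-c\beta_f-\tfrac{2(\alpha-1)(\alpha-3)}{\mu_f t}]$. The same holds for $g$.

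\textbf{Main obstacle.} The hardest part is the bookkeeping of the $\beta_f$ constant $c$. Naive combination gives $c=1+(\alpha-3)=\alpha-2$, whereas the statement claims $\alpha-4$, which requires recovering an extra $2\beta_f$.

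I would first try to gain it by not fully discarding terms. One option is to keep the cross term $2(\alpha-1)t\langle s-s^*,\dot s\rangle$ exactly rather than applying Young's inequality. Another is to retain the unused part $-rtF$ together with the portion of the Hessian dissipation not needed in Step 1, since $F\le\langle\nabla F,s-s^*\rangle$ by Lemma~\ref{Lem: dF}. If neither recovers the extra $2\beta_f$, I would recheck the sign of $\dot W_3$ in Lemma~\ref{Lem: W_bound_AVD}. Only the leading behaviour $\gamma t$ of this bracket matters for the subsequent rate $\mathcal O(t^{-(\alpha-1)})$, so the proof structure is unaffected by the exact value of $c$.
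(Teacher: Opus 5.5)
Your Steps 1--3 follow the paper's proof. The paper bounds the Hessian terms by $\mu_f\|\dot x\|^2$ and $\mu_g\|\dot y\|^2$. It then weakens $-[(\alpha-3)\gamma t+(\alpha-2)r]F$ to $-(\alpha-3)(\gamma t+r)F$, so that it cancels the $F$-part of $(\alpha-3)W$; keeping $-rtF$ and discarding it, as you do, amounts to the same thing. Finally it bounds $W\le t^2\|\dot s\|^2+(\alpha-1)^2\|s-s^*\|^2+\cdots$ and uses $\|x-x^*\|^2\le\tfrac{2}{\mu_f}D_f$, $\|y-y^*\|^2\le\tfrac{2}{\mu_g}D_g$ (in your notation $D_f,D_g$). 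Carrying out the paper's final ``combining'' step gives the coefficient of $D_f$ as
\[
-(\alpha-1)t(\gamma t+r-\beta_f)+(\alpha-3)(\alpha-1)\beta_f t+\tfrac{2(\alpha-1)^2(\alpha-3)}{\mu_f}
=-(\alpha-1)t\Bigl[\gamma t+r-(\alpha-2)\beta_f-\tfrac{2(\alpha-1)(\alpha-3)}{\mu_f t}\Bigr].
\]
This is exactly your $c=\alpha-2$. Your computation is right, and the $(\alpha-4)$ in the statement is a misprint. Remark~\ref{Rem: W_bound_AVD_SC} corroborates this: its threshold $\tfrac{(\alpha-3)\beta_f}{\gamma}+\sqrt{\cdots}$ is precisely what $c=\alpha-2$ together with $r\ge\beta_f$ requires. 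Hence Theorem~\ref{Thm: AVD_SC} is unaffected, as you anticipated.

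You should drop the fallback attempts to recover an extra $2\beta_f$. The inequality with $(\alpha-4)\beta_f$ is false, so no refinement can produce it. For a counterexample, take $\alpha=3$ (so $\theta=\tfrac12$), $A=0$, $f=\tfrac{\mu}{2}\|x\|^2$, $g=\tfrac{\mu}{2}\|y\|^2$, $\beta_f=\beta_g=r=\beta$, and initial data with $\dot s(t_0)=0\neq s(t_0)$. Then $s^*=0$ and $F=D_f+D_g$. The exact identity of Lemma~\ref{Lem: W_bound_AVD} gives $t_0\dot W(t_0)=-(2\gamma t_0^2+\beta t_0)(D_f+D_g)$. The claimed right-hand side is $-(2\gamma t_0^2+4\beta t_0)(D_f+D_g)$, which is strictly smaller.

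None of your proposed refinements helps at this point:
\begin{itemize}
\item There is no $(\alpha-3)W$ term, hence no cross term or Young slack to exploit.
\item There is no Hessian dissipation, since $\dot s=0$.
\item The term $-rtF$ is already fully counted.
\end{itemize}
There is also no sign error in $\dot W_3$: since $\tfrac{d}{dt}D_f=\langle\nabla^2 f(x)\dot x,x-x^*\rangle$, $\dot W_3$ is as stated in the paper. The correct lemma is the one you derived, with $(\alpha-2)\beta_f$ and $(\alpha-2)\beta_g$ in place of $(\alpha-4)\beta_f$ and $(\alpha-4)\beta_g$.
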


\begin{proof}
Since $f$ and $g$ are strongly convex, we have
$$ \langle \nabla^2 f(x) \dot{x}, \dot{x} \rangle \ge \mu_f \| \dot{x} \|^2,
\qbox{and} \langle \nabla^2 g(y) \dot{y}, \dot{y} \rangle \ge \mu_g \| \dot{y} \|^2, $$
which, in combination with Lemma \ref{Lem: W_bound_AVD}, gives
\begin{equation}\label{E: dev_W_bound_AVD_SC_temp}
\begin{aligned}
\dot{W}
&\le - \min(\mu_f\beta_f,\mu_g\beta_g)t^2 \| \dot{s} \|^2 - (\alpha-3)(\gamma t + r) F \\
&\quad -(\alpha-1)( \gamma t + r-\beta_f ) \left[\langle \nabla f(x), x-x^* \rangle - \left( f(x) - f(x^*) \right) \right] \\
&\quad -(\alpha-1)( \gamma t + r-\beta_g ) \left[ \langle \nabla g(y), y-y^* \rangle - \left( g(y) - g(y^*) \right) \right].
\end{aligned}
\end{equation}
By definition of $W(t)$ in \eqref{E: W_AVD}, we have
\begin{equation}\label{E: W_bound_AVD_SC_temp}
\begin{aligned}
W
&\le t^2 \| \dot{s} \|^2 + (\alpha-1)^2 \| s-s^* \|^2 + t(\gamma t + r) F \\ 
&\quad + (\alpha-1)\beta_f t \left[\langle \nabla f(x), x-x^* \rangle - \left( f(x) - f(x^*) \right) \right] \\
&\quad + (\alpha-1)\beta_g t \left[ \langle \nabla g(y), y-y^* \rangle - \left( g(y) - g(y^*) \right) \right] \\
&\le t^2 \| \dot{s} \|^2 + t(\gamma t + r) F 
 + (\alpha-1) t \left[ \beta_f + \frac{2(\alpha-1)}{\mu_f t} \right] \left[\langle \nabla f(x), x-x^* \rangle - \left( f(x) - f(x^*) \right) \right] \\
&\quad + (\alpha-1) t \left[ \beta_g + \frac{2(\alpha-1)}{\mu_g t} \right] \left[ \langle \nabla g(y), y-y^* \rangle - \left( g(y) - g(y^*) \right) \right],
\end{aligned}
\end{equation}
where the last inequality is due to strong convexity of $f$ and $g$. Combining \eqref{E: dev_W_bound_AVD_SC_temp} and \eqref{E: W_bound_AVD_SC_temp} gives the desired result.
\end{proof}

\begin{remark}\label{Rem: W_bound_AVD_SC}
If $r\ge \max(\beta_f,\beta_g)$ and
$$ t\ge \max\left( \tfrac{\alpha-3}{\min(\mu_f\beta_f,\mu_g\beta_g)}, \tfrac{(\alpha-3)\beta_f}{\gamma} + \sqrt{\tfrac{ 2(\alpha-1)(\alpha-3) }{\gamma\mu_f} }, \tfrac{(\alpha-3)\beta_g}{\gamma} + \sqrt{\tfrac{ 2(\alpha-1)(\alpha-3) }{\gamma\mu_g} } \right), $$
we have $t\dot{W} + (\alpha-3)W \le 0$.
\end{remark}

Now we are in a position to establish the convergence result under strong convexity of $f$ and $g$ for system \eqref{System: PD_system_AVD}.

\begin{theorem}\label{Thm: AVD_SC}
Let $f:\X\to\R$ and $g:\Y\to\R$ be strongly convex with parameters $\mu_f,\mu_g>0$ and continuously differentiable. Let $(x,y):[t_0,\infty)\to\X\times\Y$ be a solution of the system \eqref{System: PD_system_AVD}, where
$$\alpha\ge3, \qbox{}\beta_f,\beta_g,\gamma>0, \qbox{} r\ge\max(\beta_f,\beta_g), \qbox{and} \theta = \frac{1}{\alpha-1}. $$
Let
$$T=\max\left( t_0, \tfrac{\alpha-3}{\min(\mu_f\beta_f,\mu_g\beta_g)}, \tfrac{(\alpha-3)\beta_f}{\gamma} + \sqrt{\tfrac{ 2(\alpha-1)(\alpha-3) }{\gamma\mu_f} }, \tfrac{(\alpha-3)\beta_g}{\gamma} + \sqrt{\tfrac{ 2(\alpha-1)(\alpha-3) }{\gamma\mu_g} } \right).$$
Then for every $t\ge T$, we have
\begin{enumerate}[label=(\roman*), itemsep=0.7em, leftmargin=12mm]
\item $\sqrt{\| x(t) - x^* \|^2 + \| y(t) - y^* \|^2} = \mathcal{O}\left( \frac{1}{t^{(\alpha-2)/2}} \right)$.

\item $\sqrt{ \| \dot{x}(t) \|^2 + \| \dot{y}(t) \|^2 } = \mathcal{O}\left( \frac{1}{t^{(\alpha-1)/2}} \right)$.

\item $\mathcal{L}(x(t),y^*) - \mathcal{L}(x^*,y(t)) = \mathcal{O}\left( \frac{1}{t^{\alpha-1}} \right)$.
\end{enumerate}
\end{theorem}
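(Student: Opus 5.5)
The argument rests on the differential inequality of Remark \ref{Rem: W_bound_AVD_SC}. Since $T$ is at least each of the thresholds listed there and $r\ge\max(\beta_f,\beta_g)$, Lemma \ref{Lem: W_bound_AVD_SC} gives $t\dot W(t)+(\alpha-3)W(t)\le 0$ for all $t\ge T$. This is equivalent to $\frac{d}{dt}\bigl[t^{\alpha-3}W(t)\bigr]\le 0$, so $W(t)\le T^{\alpha-3}W(T)\,t^{-(\alpha-3)}$ for $t\ge T$. The quantity $W(T)$ is finite because the solution is defined on $[t_0,\infty)$. Write $C:=T^{\alpha-3}W(T)$.

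Item (iii) then follows directly from the definition \eqref{E: W_AVD}. All terms of $W$ are nonnegative, so $t(\gamma t+r)F(s)\le W(t)\le C t^{-(\alpha-3)}$. Hence $F(s(t))\le C/(\gamma t^{\alpha-1})$, and \eqref{E: F} identifies $F$ with $\mathcal{L}(x,y^*)-\mathcal{L}(x^*,y)$.

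For (i), I would use strong convexity in the Bregman-type terms of $W$. Strong convexity gives $\langle\nabla f(x),x-x^*\rangle-(f(x)-f(x^*))\ge\frac{\mu_f}{2}\|x-x^*\|^2$, and similarly for $g$. Therefore
\[
\frac{(\alpha-1)t}{2}\bigl(\beta_f\mu_f\|x-x^*\|^2+\beta_g\mu_g\|y-y^*\|^2\bigr)\le W(t)\le Ct^{-(\alpha-3)},
\]
which yields $\|s-s^*\|^2=\mathcal{O}(t^{-(\alpha-2)})$. This step is exactly where the Hessian-driven damping enters, through the weights $\beta_f,\beta_g$ in $W$.

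For (ii), I would first use $\frac12\|t\dot s+(\alpha-1)(s-s^*)\|^2\le W\le Ct^{-(\alpha-3)}$ to get $\|t\dot s+(\alpha-1)(s-s^*)\|=\mathcal{O}(t^{-(\alpha-3)/2})$. The triangle inequality together with (i) then gives
\[
t\|\dot s\|\le \mathcal{O}(t^{-(\alpha-3)/2})+(\alpha-1)\,\mathcal{O}(t^{-(\alpha-2)/2})=\mathcal{O}(t^{-(\alpha-3)/2}).
\]
So $\|\dot s\|=\mathcal{O}(t^{-(\alpha-1)/2})$, as claimed.

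The main obstacle is the differential inequality itself, and it is already handled by Lemma \ref{Lem: W_bound_AVD_SC} and Remark \ref{Rem: W_bound_AVD_SC}. The bracket multiplying $t^2\|\dot s\|^2$ is nonnegative once $t\ge(\alpha-3)/\min(\mu_f\beta_f,\mu_g\beta_g)$. The remaining brackets are nonnegative once $\gamma t^2-(\alpha-3)\beta_f t-2(\alpha-1)(\alpha-3)/\mu_f\ge 0$ (using $r\ge\beta_f$), and the root bound in the remark is sufficient for this. With that in hand, the proof reduces to the integration and the three extractions above.
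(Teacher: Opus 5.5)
Your proposal is correct and follows the paper's proof essentially step for step. You integrate $t\dot W+(\alpha-3)W\le 0$ (Remark \ref{Rem: W_bound_AVD_SC}) to get $W(t)\le T^{\alpha-3}W(T)\,t^{-(\alpha-3)}$, then read off (iii) from the term $t(\gamma t+r)F$, (i) from the Bregman terms via strong convexity with weights $\beta_f,\beta_g$, and (ii) from the triangle inequality. Your check that the brackets are nonnegative once $\gamma t^2-(\alpha-3)\beta_f t-2(\alpha-1)(\alpha-3)/\mu_f\ge 0$ is also sound and matches $T$. In fact, combining \eqref{E: dev_W_bound_AVD_SC_temp} with $(\alpha-3)$ times \eqref{E: W_bound_AVD_SC_temp} gives the coefficient $(\alpha-2)\beta_f$, not the printed $(\alpha-4)\beta_f$ in Lemma \ref{Lem: W_bound_AVD_SC}; with $r\ge\beta_f$ this yields exactly your condition, and your condition is sufficient under either coefficient.
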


\begin{proof}
By Remark \ref{Rem: W_bound_AVD_SC}, we have $t\dot{W} + (\alpha-3)W \le 0$, which implies that
$$ t^{\alpha-3}W(t) \le T^{\alpha-3}W(T),\qbox{and} W(t) = \mathcal{O}\left( \frac{1}{t^{\alpha-3}} \right). $$

(i) By strong convexity of $f$ and $g$, we have
\begin{align*}
\langle \nabla f(x), x-x^* \rangle - ( f(x) - f(x^*) ) &\ge \frac{\mu_f}{2}\| x-x^* \|^2,\\
\langle \nabla g(y), y-y^* \rangle - ( g(y) - g(y^*) ) &\ge \frac{\mu_g}{2}\| y - y^* \|^2,
\end{align*}
which, combined with \eqref{E: W_AVD}, gives
$$ \frac{(\alpha-1)t}{2}\min(\mu_f\beta_f,\mu_g\beta_g) \| s-s^* \|^2 \le W(t). $$
As a result, 
$$ \| s-s^* \| = \mathcal{O}\left( \frac{1}{t^{(\alpha-2)/2}} \right). $$

(ii) In view of \eqref{E: W_AVD}, we have 
$$ \| t\dot{s} + (\alpha-1)(s-s^*) \| = \mathcal{O}\left( \frac{1}{t^{(\alpha-3)/2}} \right). $$
Using the triangle inequality, we have
$$ t\| \dot{s} \| \le \| t\dot{s} + (\alpha-1)(s-s^*) \| + (\alpha-1)\| s-s^* \| = \mathcal{O}\left( \frac{1}{t^{(\alpha-3)/2}} \right), $$
which implies that
$$ \| \dot{s} \| = \mathcal{O}\left( \frac{1}{t^{(\alpha-1)/2}} \right). $$ 

(iii) Likewise, it follows from \eqref{E: W_AVD} that
$$ \mathcal{L}(x(t),y^*) - \mathcal{L}(x^*,y(t)) \le \frac{W(t)}{t(\gamma t + r)} = \mathcal{O}\left( \frac{1}{t^{\alpha-1}} \right), $$
which allows us to conclude.
\end{proof}

\begin{remark} 
This result is enabled by inclusion of the Hessian-driven damping ($\beta_f,\beta_g>0$), which demonstrates the advantages of including the Hessian-driven damping in the dynamics.
\end{remark}

\section{Inertial Dynamics II: The Strongly Convex Case}\label{Sec: PD-HBF-H}
In this section, we consider problem \eqref{Prob: P} in case $f$ and $g$ are strongly convex and their strong convexity parameters are known. We propose to solve it using the following inertial dynamics with Hessian-driven damping:
\begin{equation}\label{System: PD_system_HBF}\tag{PD-HBF-H}
\left\{
\begin{array}{lcl}
\ddot{x} + \alpha\dot{x} + \beta_f\nabla_{xx}\mathcal{L}(x,y)\dot{x} + \gamma \nabla_x \mathcal{L}(x,y+\theta\dot{y}) &=& 0,\\[4pt]
\ddot{y} + \alpha\dot{y} - \beta_g\nabla_{yy}\mathcal{L}(x,y)\dot{y} - \gamma \nabla_y \mathcal{L}(x+\theta\dot{x},y) &=& 0,
\end{array} 
\right.
\end{equation}
where $\alpha,\beta_f,\beta_g,\gamma,\theta >0$, with $t\ge t_0$ for some $t_0\ge 0$. Notice that \eqref{System: PD_system_HBF} is different from \eqref{E: bilinear_HBF} \cite{He_2025_PD_SC}, in its inclusion of the Hessian-driven damping ($\beta_f,\beta_g>0$). 

As in Subsection \ref{Subsec: F}, we define $s=(x,y)$, and introduce $F(s)$ (see \eqref{E: F} ) so as to simplify the notation. By doing so, \eqref{System: PD_system_HBF} can be rewritten as
\begin{equation}\label{E: PD_system_HBF}
\ddot{s} + \alpha \dot{s} + \begin{bmatrix}
\beta_f \nabla^2 f(x) \dot{x}\\[2pt]
\beta_g \nabla^2 g(y) \dot{y}
\end{bmatrix} + \gamma \nabla F + \gamma \begin{bmatrix}
A^* ( \theta \dot{y} + y - y^* ) \\[2pt]
- A( \theta\dot{x} + x - x^* )
\end{bmatrix} = 0.
\end{equation}

\subsection{Energy Estimations}
Our analysis centers around the energy function $W:[t_0,\infty)\to\R$, defined by
\begin{align}\label{E: W_HBF}
W(t) 
&= \frac{1}{2}\| \dot{s} + \xi(s-s^*) \|^2 + \gamma F(s) 
 + \beta_f \xi \left[ \langle \nabla f(x), x-x^* \rangle - \left( f(x) - f(x^*) \right) \right] \nonumber\\
&\quad + \beta_g \xi \left[ \langle \nabla g(y), y-y^* \rangle - \left( g(y) - g(y^*) \right) \right],
\end{align}
where $0\le\xi\le\alpha$. Using \eqref{E: W_HBF}, we have the following:

\begin{lemma}\label{Lem: W_bound_HBF}
Let $W$ be defined by \eqref{E: W_HBF}. Set $\xi = \frac{1}{\theta}$. Then, we have
\begin{align*}
\dot{W}(t) + (\alpha-\xi)W(t)
&= \frac{\xi^2(\alpha-\xi)}{2}\| s-s^* \|^2 
  - \frac{\alpha-\xi}{2}\| \dot{s} \|^2
  - \gamma(2\xi-\alpha) F \\
&\quad - \xi\left( \gamma-(\alpha-\xi)\beta_f \right) [ \langle \nabla f(x), x-x^* \rangle - \left( f(x) - f(x^*) \right) ] \\
&\quad - \xi\left( \gamma-(\alpha-\xi)\beta_g \right) [ \langle \nabla g(y), y-y^* \rangle - \left( g(y) - g(y^*) \right) ] \\
&\quad -\beta_f \langle \nabla^2 f(x)\dot{x}, \dot{x} \rangle 
  -\beta_g \langle \nabla^2 g(y)\dot{y}, \dot{y} \rangle.
\end{align*}
\end{lemma}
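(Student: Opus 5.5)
The plan is to mirror the computation in the proof of Lemma \ref{Lem: W_bound_AVD}. We split $W=\sum_{i=1}^4 W_i$ following the order of the terms in \eqref{E: W_HBF}, differentiate each piece along trajectories of \eqref{E: PD_system_HBF}, and then add $(\alpha-\xi)W$ and collect terms. Throughout, write $D_f:=\langle \nabla f(x), x-x^* \rangle - ( f(x) - f(x^*))$, $D_g$ analogously, and let $B$ denote the bilinear coupling vector $\bigl[A^*(\theta\dot y+y-y^*);\,-A(\theta\dot x+x-x^*)\bigr]$.

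\textbf{First term.} We have $\dot W_1=\langle \dot s+\xi(s-s^*),\ddot s+\xi\dot s\rangle$. From \eqref{E: PD_system_HBF},
$$\ddot s+\xi\dot s=-(\alpha-\xi)\dot s-\bigl[\beta_f\nabla^2 f(x)\dot x;\,\beta_g\nabla^2 g(y)\dot y\bigr]-\gamma\nabla F-\gamma B .$$
The choice $\xi=1/\theta$ is used exactly here. It gives $\dot s+\xi(s-s^*)=\xi(\theta\dot s+s-s^*)$, and $B$ is the skew operator $(u,v)\mapsto(A^*v,-Au)$ applied to this same vector. Hence the coupling contribution $\langle \dot s+\xi(s-s^*),B\rangle$ vanishes, just as in Lemma \ref{Lem: W_bound_AVD}. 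What remains is
$$\dot W_1=-(\alpha-\xi)\bigl(\|\dot s\|^2+\xi\langle \dot s,s-s^*\rangle\bigr)-\beta_f\langle\nabla^2 f\dot x,\dot x\rangle-\beta_g\langle\nabla^2 g\dot y,\dot y\rangle-\beta_f\xi\langle\nabla^2 f\dot x,x-x^*\rangle-\beta_g\xi\langle\nabla^2 g\dot y,y-y^*\rangle-\gamma\langle\nabla F,\dot s\rangle-\gamma\xi\langle\nabla F,s-s^*\rangle .$$

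\textbf{Remaining terms.} We have $\dot W_2=\gamma\langle\nabla F,\dot s\rangle$. Also $\dot W_3=\beta_f\xi\langle\nabla^2 f(x)\dot x,x-x^*\rangle$, since the two $\langle\nabla f(x),\dot x\rangle$ terms cancel; $\dot W_4$ is analogous. Summing, the $\langle\nabla F,\dot s\rangle$ terms and the mixed Hessian terms $\langle\nabla^2 f\dot x,x-x^*\rangle$, $\langle\nabla^2 g\dot y,y-y^*\rangle$ cancel. Thus $\dot W$ consists of the two Hessian quadratic forms, $-(\alpha-\xi)(\|\dot s\|^2+\xi\langle\dot s,s-s^*\rangle)$, and $-\gamma\xi\langle\nabla F,s-s^*\rangle$. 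We then replace the last quantity by $F+D_f+D_g$ using Lemma \ref{Lem: dF}.

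\textbf{Adding $(\alpha-\xi)W$.} Expand $\frac12\|\dot s+\xi(s-s^*)\|^2=\frac12\|\dot s\|^2+\xi\langle\dot s,s-s^*\rangle+\frac{\xi^2}{2}\|s-s^*\|^2$. The cross term $\xi\langle \dot s,s-s^*\rangle$ cancels exactly, and the coefficients collect as follows:
\begin{itemize}
\item $\|\dot s\|^2$: $-(\alpha-\xi)+\frac{\alpha-\xi}{2}=-\frac{\alpha-\xi}{2}$;
\item $\|s-s^*\|^2$: $\frac{\xi^2(\alpha-\xi)}{2}$;
\item $F$: $\gamma(\alpha-\xi)-\gamma\xi=-\gamma(2\xi-\alpha)$;
\item $D_f$: $-\gamma\xi+(\alpha-\xi)\beta_f\xi=-\xi(\gamma-(\alpha-\xi)\beta_f)$, and $D_g$ likewise.
\end{itemize}
Together with the untouched terms $-\beta_f\langle\nabla^2 f\dot x,\dot x\rangle-\beta_g\langle\nabla^2 g\dot y,\dot y\rangle$, this is exactly the claimed identity.

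\textbf{Main obstacle.} The only genuinely non-routine step is the cancellation of the bilinear coupling. This hinges on the identity $\dot s+\xi(s-s^*)=\xi(\theta\dot s+s-s^*)$, which holds precisely when $\xi\theta=1$, combined with the skew-symmetry of $(u,v)\mapsto(A^*v,-Au)$. The rest is bookkeeping of coefficients. The one point needing care is the sign convention for $g$ in the Hessian damping, since the $y$-equation carries $-\beta_g\nabla_{yy}\mathcal L\,\dot y=+\beta_g\nabla^2 g(y)\dot y$.
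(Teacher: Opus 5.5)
Your proposal is correct and follows the paper's proof essentially step by step: the same splitting $W=\sum_{i=1}^4 W_i$, the same cancellation of the bilinear coupling via $\xi\theta=1$ and skew-symmetry, the same use of Lemma \ref{Lem: dF}, and the same coefficient bookkeeping. The only cosmetic difference is that the paper first rewrites $\dot W$ with the completed square $-\frac{\alpha-\xi}{2}\|\dot s+\xi(s-s^*)\|^2$, which then cancels against $(\alpha-\xi)W_1$, whereas you expand $(\alpha-\xi)W_1$ and cancel the cross term directly; the two computations are identical.
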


\begin{proof}
Denote $W(t) := \sum_{i=1}^4 W_i(t)$, with $W_i$ $(i=1,2,3,4)$ being the terms in \eqref{E: W_HBF} sequentially. We begin by using \eqref{E: PD_system_HBF}, to obtain
\begin{align*}
\dot{W}_1(t)
&= -\langle \dot{s} + \xi(s-s^*), (\alpha-\xi)\dot{s} \rangle
   - \left\langle \dot{s} + \xi(s-s^*), \begin{bmatrix}
\beta_f \nabla^2 f(x) \dot{x}\\[2pt]
\beta_g \nabla^2 g(y) \dot{y}
\end{bmatrix} \right\rangle \\
&\quad - \gamma \langle \nabla F, \dot{s} + \xi(s-s^*) \rangle 
 - \gamma \left\langle \dot{s} + \xi(s-s^*), \begin{bmatrix}
A^* ( \theta \dot{y} + y - y^* ) \\[2pt]
- A( \theta\dot{x} + x - x^* )
\end{bmatrix} \right\rangle.
\end{align*}
Since $\xi = \frac{1}{\theta}$, we have
$$ \left\langle \dot{s} + \xi(s-s^*), \begin{bmatrix}
A^* ( \theta \dot{y} + y - y^* ) \\[2pt]
- A( \theta\dot{x} + x - x^* )
\end{bmatrix} \right\rangle 
= \frac{1}{\theta}\left\langle \begin{bmatrix}
\theta\dot{x} + x-x^*\\[2pt]
\theta\dot{y} + y-y^*
\end{bmatrix}, \begin{bmatrix}
A^* ( \theta \dot{y} + y - y^* ) \\[2pt]
- A( \theta\dot{x} + x - x^* )
\end{bmatrix} \right\rangle = 0. $$
As a result,
\begin{align*}
\dot{W}_1 (t)
&= -(\alpha-\xi) \| \dot{s} \|^2 - \xi(\alpha-\xi)\langle s-s^*, \dot{s} \rangle
   -\beta_f \langle \nabla^2 f(x)\dot{x}, \dot{x} \rangle 
   -\beta_g \langle \nabla^2 g(y)\dot{y}, \dot{y} \rangle\\
&\quad - \beta_f \xi \langle \nabla^2 f(x)\dot{x}, x-x^* \rangle
   - \beta_g \xi \langle \nabla^2 g(y)\dot{y}, y-y^* \rangle 
   -\gamma \langle \nabla F, \dot{s} \rangle 
   -\gamma\xi \langle \nabla F, s-s^* \rangle.
\end{align*}
Likewise, we obtain
\begin{align*}
\dot{W}_2(t) &= \gamma \langle \nabla F, \dot{s} \rangle,\\
\dot{W}_3(t) &= \beta_f \xi \langle \nabla^2 f(x)\dot{x}, x-x^* \rangle,\\
\dot{W}_4(t) &= \beta_g \xi \langle \nabla^2 g(y)\dot{y}, y-y^* \rangle.
\end{align*}
Summing up these equations, it follows that
\begin{align*}
\dot{W}(t) 
&= -(\alpha-\xi) \| \dot{s} \|^2 - \xi(\alpha-\xi)\langle s-s^*, \dot{s} \rangle
  -\gamma\xi \langle \nabla F, s-s^* \rangle
  -\beta_f \langle \nabla^2 f(x)\dot{x}, \dot{x} \rangle \\
& -\beta_g \langle \nabla^2 g(y)\dot{y}, \dot{y} \rangle,
\end{align*}
which, combined with Lemma \ref{Lem: dF}, gives
\begin{align*}
\dot{W}(t)
&= -\frac{\alpha-\xi}{2}\left\| \dot{s} + \xi(s-s^*) \right\|^2
  + \frac{\xi^2(\alpha-\xi)}{2}\| s-s^* \|^2 
  - \frac{\alpha-\xi}{2}\| \dot{s} \|^2 
  - \gamma\xi F \\
&\quad -\gamma\xi [ \langle \nabla f(x), x-x^* \rangle - \left( f(x) - f(x^*) \right) ] 
 -\gamma\xi [ \langle \nabla g(y), y-y^* \rangle - \left( g(y) - g(y^*) \right) ]\\
&\quad -\beta_f \langle \nabla^2 f(x)\dot{x}, \dot{x} \rangle 
  -\beta_g \langle \nabla^2 g(y)\dot{y}, \dot{y} \rangle.   
\end{align*}
Upon adding this equation to
\begin{align*}
(\alpha-\xi)W(t)
&= \frac{\alpha-\xi}{2}\| \dot{s} + \xi(s-s^*) \|^2 + \gamma(\alpha-\xi) F \\
&\quad + \beta_f \xi (\alpha-\xi) [ \langle \nabla f(x), x-x^* \rangle - \left( f(x) - f(x^*) \right) ] \\
&\quad + \beta_g \xi (\alpha-\xi) [ \langle \nabla g(y), y-y^* \rangle - \left( g(y) - g(y^*) \right) ],   
\end{align*}
the result follows immediately.
\end{proof}

\subsection{Convergence Analysis}

\begin{theorem}
Let $f:\X\to\R$ and $g:\Y\to\R$ be strongly convex with parameters $\mu_f,\mu_g>0$ and continuously differentiable. Let $(x,y):[t_0,\infty)\to\X\times\Y$ be a solution of the system \eqref{System: PD_system_HBF}, where
\begin{align*}
& \alpha\le 2\sqrt{ \min(\mu_f,\mu_g) \gamma_0 },\qbox{} \min(\beta_f,\beta_g) > 0,\qbox{} \gamma = \gamma_0 + \frac{\alpha}{2} \max(\beta_f,\beta_g),\\
&\qbox{with} \theta = \frac{2}{\alpha},\qbox{and}\gamma_0 >0. 
\end{align*}
Then for every $t\ge t_0$, we have
\begin{enumerate}[label=(\roman*), itemsep=0.7em, leftmargin=12mm]
\item ${\| x(t) - x^* \|^2 + \| y(t) - y^* \|^2} = \mathcal{O}\left( e^{-\frac{1}{2}\alpha t} \right)$.

\item ${ \| \dot{x}(t) \|^2 + \| \dot{y}(t) \|^2 } = \mathcal{O}\left( e^{-\frac{1}{2}\alpha t} \right)$.

\item $\mathcal{L}(x(t),y^*) - \mathcal{L}(x^*,y(t)) = \mathcal{O}\left( e^{-\frac{1}{2}\alpha t} \right)$. 
\end{enumerate}
\end{theorem}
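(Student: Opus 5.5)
The plan is to apply Lemma \ref{Lem: W_bound_HBF} with $\xi = \frac{1}{\theta} = \frac{\alpha}{2}$, which lies in $[0,\alpha]$ as required, and to show that the right-hand side is nonpositive. This gives the differential inequality $\dot W + \frac{\alpha}{2}W \le 0$. Grönwall then yields $W(t)\le W(t_0)e^{-\frac{\alpha}{2}(t-t_0)}$, and each of (i)–(iii) is read off from the structure of $W$ in \eqref{E: W_HBF}.

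With $\xi=\frac{\alpha}{2}$ we have $\alpha-\xi=\frac{\alpha}{2}$ and $2\xi-\alpha=0$, so the $F$-term in Lemma \ref{Lem: W_bound_HBF} vanishes. The remaining terms are handled as follows.
\begin{itemize}
\item The term $-\frac{\alpha}{4}\|\dot s\|^2$ is nonpositive.
\item The Hessian terms are nonpositive by convexity.
\item The only positive term is $\frac{\xi^2(\alpha-\xi)}{2}\|s-s^*\|^2=\frac{\alpha^3}{16}\|s-s^*\|^2$.
\end{itemize}
The main step is absorbing this positive term using the Bregman-type terms. By the choice $\gamma=\gamma_0+\frac{\alpha}{2}\max(\beta_f,\beta_g)$ we have $\gamma-(\alpha-\xi)\beta_f\ge\gamma_0$, and likewise for $g$. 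Strong convexity gives
\begin{equation*}
\langle \nabla f(x), x-x^*\rangle-(f(x)-f(x^*))\ge\frac{\mu_f}{2}\|x-x^*\|^2 ,
\end{equation*}
which follows from $f(x^*)\ge f(x)+\langle\nabla f(x),x^*-x\rangle+\frac{\mu_f}{2}\|x-x^*\|^2$, and the analogous bound holds for $g$. Hence these two terms contribute at most $-\frac{\alpha}{2}\gamma_0\frac{\min(\mu_f,\mu_g)}{2}\|s-s^*\|^2$. The total coefficient of $\|s-s^*\|^2$ is therefore
\begin{equation*}
\frac{\alpha}{16}\left(\alpha^2-4\gamma_0\min(\mu_f,\mu_g)\right)\le 0,
\end{equation*}
which is exactly the hypothesis $\alpha\le 2\sqrt{\min(\mu_f,\mu_g)\gamma_0}$. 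This is the only place where the parameter coupling is used, and I expect it to be the main (though short) obstacle: one must check that the coefficient $\gamma_0$ left after subtracting the Hessian-damping contribution is still large enough.

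For the conclusions, I will use the bound on $W$ term by term.
\begin{itemize}
\item For (iii), $F\le W/\gamma$.
\item For (i), $W\ge\frac{\alpha}{2}\beta_f\frac{\mu_f}{2}\|x-x^*\|^2$ and similarly in $y$, so $\|s-s^*\|^2\le \frac{4W}{\alpha\min(\beta_f\mu_f,\beta_g\mu_g)}$.
\item For (ii), $\frac12\|\dot s+\xi(s-s^*)\|^2\le W$, and the triangle inequality $\|\dot s\|\le\|\dot s+\xi(s-s^*)\|+\xi\|s-s^*\|$ together with (i) gives $\|\dot s\|^2=\mathcal O(e^{-\frac{\alpha}{2}t})$.
\end{itemize}
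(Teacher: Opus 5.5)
Your proposal is correct and follows the paper's proof essentially step for step. You apply Lemma \ref{Lem: W_bound_HBF} with $\xi=\frac{1}{\theta}=\frac{\alpha}{2}$, absorb the positive $\frac{\alpha^3}{16}\|s-s^*\|^2$ term using the strong-convexity lower bounds on the Bregman-type terms (via $\gamma-\frac{\alpha}{2}\beta_f\ge\gamma_0$, $\gamma-\frac{\alpha}{2}\beta_g\ge\gamma_0$ and $\alpha^2\le 4\gamma_0\min(\mu_f,\mu_g)$), conclude $\dot W+\frac{\alpha}{2}W\le 0$, and read (i)--(iii) off the terms of $W$, exactly as the paper does. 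Your Gr\"onwall bound $W(t)\le W(t_0)e^{-\frac{\alpha}{2}(t-t_0)}$ is slightly more careful than the paper's $W(t_0)e^{-\frac{1}{2}\alpha t}$, which omits the $t_0$ shift, though both give the same rate.
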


\begin{proof}
Since $f$ and $g$ are strongly convex, we have
$$\langle \nabla^2 f(x)\dot{x}, \dot{x} \rangle \ge 0, \quad \langle \nabla^2 g(y)\dot{y}, \dot{y} \rangle\ge 0, $$
and
\begin{align}
\label{E: PD_HBF_SC_bound_1}
\langle \nabla f(x), x-x^* \rangle - \left( f(x) - f(x^*) \right) 
&\ge \frac{\mu_f}{2}\| x-x^* \|^2,\\
\label{E: PD_HBF_SC_bound_2}
\langle \nabla g(y), y-y^* \rangle - \left( g(y) - g(y^*) \right) 
&\ge \frac{\mu_g}{2}\| y-y^* \|^2.
\end{align}
Setting $\xi = \frac{\alpha}{2}$ in Lemma \ref{Lem: W_bound_HBF}, we obtain
\begin{align*}
\dot{W} + \frac{\alpha}{2}W 
&\le -\tfrac{\alpha}{4}\left[ \mu_f\left( \gamma - \tfrac{1}{2}\alpha\beta_f \right) - \tfrac{\alpha^2}{4} \right] \| x-x^* \|^2 
 -\tfrac{\alpha}{4}\left[ \mu_g\left( \gamma - \tfrac{1}{2}\alpha\beta_g \right) - \tfrac{\alpha^2}{4} \right] \| y-y^* \|^2.
\end{align*}
Keeping in mind that
$$ \alpha\le 2\sqrt{ \min(\mu_f,\mu_g) \gamma_0 },\qbox{and} \gamma = \gamma_0 + \frac{\alpha}{2} \max(\beta_f,\beta_g), $$
we deduce $\dot{W} + \frac{\alpha}{2}W \le 0$, which gives 
$$ W(t) \le W(t_0) e^{-\frac{1}{2}\alpha t}. $$

(i) Using \eqref{E: PD_HBF_SC_bound_1} and \eqref{E: PD_HBF_SC_bound_2} in \eqref{E: W_HBF} gives
$$ \frac{\xi}{2}\min(\mu_f\beta_f,\mu_g\beta_g) \| s-s^* \|^2 \le W(t) \le W(t_0) e^{-\frac{1}{2}\alpha t}, $$
and the result follows.

(ii) Likewise, we have 
$$ \| \dot{s} + \xi(s-s^*) \| \le \sqrt{2W(t)} \le \sqrt{2W(t_0)} e^{-\frac{1}{4}\alpha t }. $$
Using the triangle inequality, we have
$$ \| \dot{s} \| \le \| \dot{s} + \xi(s-s^*) \| + \xi \| s-s^* \| \le \left( \sqrt{ 2W(t_0)} + \sqrt{\tfrac{2\xi W(t_0)}{\min(\mu_f\beta_f,\mu_g\beta_g)} } \right) e^{-\frac{1}{4}\alpha t } . $$

(iii) The argument follows by applying $ F(s) \le \frac{1}{\gamma}W(t) $.

(iv) In view of optimality conditions and
\begin{align*}
\left[ \langle \nabla f(x), x-x^* \rangle - \left( f(x) - f(x^*) \right) \right] 
&\le W(t)/(\beta_f\xi),\\
\left[ \langle \nabla g(y), y-y^* \rangle - \left( g(y) - g(y^*) \right) \right]
&\le W(t)/(\beta_g\xi),
\end{align*}
the result follows by applying smoothness.
\end{proof}

\begin{remark}
Setting $\gamma_0=1$, $\alpha = 2\sqrt{\min(\mu_f,\mu_g)}$ and $\gamma = 1 + \frac{\alpha}{2} \max(\beta_f,\beta_g)$, one obtains an accelerated linear convergence rate:
$$ \mathcal{L}(x(t),y^*) - \mathcal{L}(x^*,y(t)) = \mathcal{O}\left( e^{-\sqrt{\min(\mu_f,\mu_g)} t} \right), $$
for the inertial primal dual dynamical system \eqref{System: PD_system_HBF}, matching the one derived for a heavy-ball system \eqref{E: bilinear_HBF} in \cite{He_2025_PD_SC}. While preserving fast convergence properties, smoother trajectories can be expected for \eqref{System: PD_system_HBF}, due to its inclusion of the Hessian-driven damping $\beta_f\nabla_{xx}\mathcal{L}(x,y)\dot{x}$ and $\beta_g\nabla_{yy}\mathcal{L}(x,y)\dot{y}$.
\end{remark}

\begin{remark} \label{R:convergent_gradient_bis}
    As discussed in Remark \ref{R:convergent_gradient}, if $f$ is (locally) smooth, then $\|\nabla f(x(t))+A^*y^*\|^2=\mathcal{O}\left( e^{-\frac{1}{2}\alpha t} \right)$. Similarly, if $g$ is (locally) smooth, then $\|\nabla g(y(t))-Ax^*\|^2=\mathcal{O}\left( e^{-\frac{1}{2}\alpha t} \right)$.
\end{remark}

\section{Inertial Dynamics III: The Affinely Constrained Case}\label{Sec: affine convex}
In this section, we consider problem \eqref{Prob: min_fx}, which is \eqref{Prob: P} with $g=\langle b, y\rangle$.

\subsection{Augmented Lagrangian and Inertial Primal Dual Dynamics}\label{Sec: ALM}
The optimality conditions for \eqref{Prob: min_fx} are given by
\begin{equation}\label{E: opt_cond}
\begin{aligned}
\nabla f(x) + A^* y = 0,\quad
Ax = b,
\end{aligned}
\end{equation} 
where $A^*:\Y\to\X$ is the adjoint operator of $A$, and $y\in\Y$ is the Lagrange multiplier. In what follows, we assume that the solution set $\mathcal{S}$ to the above equations is nonempty.

The augmented Lagrangian $\mathcal{L}_\rho:\X\times\Y\to\R$ is defined by \eqref{E: Lm}:
$$ \mathcal{L}_\rho(x,y) = f(x) + \langle y, Ax-b \rangle + \frac{\rho}{2}\| Ax-b \|^2, $$
where $\rho>0$. If $\rho=0$, $\mathcal{L}_\rho(x,y)$ reduces to the regular Lagrangian $\mathcal{L}(x,y)$. A point $(x,y)$ satisfies \eqref{E: opt_cond} if, and only if, $\nabla\mathcal{L}_\rho(x,y)=0$. Therefore, \eqref{Prob: min_fx} is equivalent to a saddle-point problem of finding $(x^*,y^*)\in\X\times\Y$ such that
$$ \mathcal{L}_\rho(x^*,y) \le \mathcal{L}_\rho(x^*,y^*) \le \mathcal{L}_\rho(x,y^*),\quad \forall\, (x,y)\in\X\times\Y.$$

To solve problem \eqref{Prob: min_fx}, we propose to follow the trajectories of the inertial primal dual dynamical system with Hessian-driven damping:  
\begin{equation}\label{inertial_primal_dual_system}\tag{PD-AVD-H}
\left\{
\begin{array}{rcl}
\ddot{x} + \frac{\alpha}{t}\dot{x} + \beta\nabla^2_{xx}\mathcal{L}_\rho \left( x, y \right) \dot{x} 
 + \left( \gamma + \frac{r}{t} \right) \nabla_x \mathcal{L}_\rho\left( x, y + \theta t \dot{y} \right) &=& 0, \\[2pt]
\ddot{y} + \frac{\alpha}{t}\dot{y} - \left( \gamma + \frac{r}{t} \right) \nabla_y \mathcal{L}_\rho\left( x + \theta t \dot{x}, y \right) &=& 0,
\end{array}
\right.
\end{equation}
where $\alpha,\beta,\gamma,\theta,r > 0$. The initial conditions are given by $\left( x(t_0), y(t_0) \right) = \left( x_0,y_0 \right)$ and $\left( \dot{x}(t_0),\dot{y}(t_0) \right) = \left( \dot{x}_0, \dot{y}_0 \right)$.

Notice that system \eqref{inertial_primal_dual_system} is different from the one proposed in \cite{He_2025}, where the system is structured as second order primal dynamics and first order dual dynamics. Notice also that system \eqref{inertial_primal_dual_system} is distinct from the one in \cite{Radu_2025_OGDA}, which has a time-dependent Hessian-driven damping. In the unconstrained case, the system in \cite{Radu_2025_OGDA} cannot recover \eqref{E: AVD-H} while preserving the convergence rate $\mathcal{O}\left( \frac{1}{t^2} \right)$.

To simplify the notation, we define the augmented state vector $s := (x, y)$ and the difference of the augmented Lagrangians
\begin{equation}\label{E: F_ALM}
F(x) := \mathcal{L}_\rho(x,y^*) - \mathcal{L}_\rho(x^*,y) 
= f(x) - f(x^*) + \langle y^*, Ax-b \rangle + \frac{\rho}{2}\| Ax-b \|^2.
\end{equation}
Notice that $F(x)\ge 0$ for all $x\in\X$, and
\begin{equation*}
\nabla F(x) = 
\nabla f(x) + A^* y^* + \rho A^*(Ax-b).
\end{equation*}  
As a result, we can rewrite \eqref{inertial_primal_dual_system} as
\begin{equation}\label{E: primal_dual}
\begin{aligned}
&&\ddot{s} + \frac{\alpha}{t}\dot{s}+ \beta\begin{bmatrix}
 \left( \nabla^2 f(x) + \rho A^* A \right) \dot{x}  \\
0
\end{bmatrix}  
&& + \left( \gamma + \frac{r}{t} \right) \begin{bmatrix}
\nabla F(x)+A^*\big( \theta t \dot{y} + y - y^* \big)\\
-A \big( \theta t \dot{x} + x-x^* \big)
\end{bmatrix} = 0,
\end{aligned}
\end{equation}
which will be used for the subsequent convergence analysis.

\subsection{Energy Estimations and Trajectory Boundedness}
Consider the energy function $W:[t_0,\infty)\to\R$, defined by
\begin{equation}\label{E: W}
\begin{aligned}
W &= \frac{1}{2}\| t\dot{s} + \xi(s-s^*) \|^2 + \frac{\eta}{2}\| s-s^* \|^2 + t(\gamma t + r) F(x) \\ 
&\quad + \beta\xi t \left[ \langle \nabla f(x), x-x^* \rangle - \left( f(x) - f(x^*) \right) \right] 
 + \frac{\beta\xi \rho t}{2}\| Ax-b \|^2,
\end{aligned}
\end{equation}
where $0\le\xi\le \alpha-1$ and $\eta = \xi(\alpha-1-\xi)\ge 0$. Observe that $W(t)\ge 0$ for all $t\ge t_0$, in view of the definition of $F$ and the convexity of $f$. We have the following:

\begin{lemma}\label{Lem: epsilon_dev}
Let $W$ be defined by \eqref{E: W}. Set $\xi = \frac{1}{\theta}$. For every $t\ge t_0$, we have
\begin{align*}
\dot{W}
&= -(\alpha-\xi-1)t\| \dot{s} \|^2
   -\beta t^2 \left\langle \left( \nabla^2 f(x) + \rho A^* A \right)\dot{x}, \dot{x} \right\rangle \\
&\quad - \frac{\rho\xi}{2}( \gamma t + r - \beta ) \| Ax-b \|^2   
  - [ (\xi-2)\gamma t + (\xi-1)r ] F(x) \\
&\quad -\xi( \gamma t + r - \beta ) \left[ \langle \nabla f(x), x-x^* \rangle - \left( f(x) - f(x^*) \right) \right].
\end{align*}
\end{lemma}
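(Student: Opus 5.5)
The plan is to mirror the proof of Lemma \ref{Lem: W_bound_AVD}. I split $W=\sum_{i=1}^5 W_i$ following the five terms of \eqref{E: W}, differentiate each along the trajectory, and use the rewritten system \eqref{E: primal_dual} to eliminate $\ddot{s}$.

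\textbf{Step 1: the kinetic term $W_1$.} We have $\dot W_1=\langle t\dot s+\xi(s-s^*),\, t\ddot s+(1+\xi)\dot s\rangle$. Multiplying \eqref{E: primal_dual} by $t$ gives
$$t\ddot s+(1+\xi)\dot s=-(\alpha-1-\xi)\dot s-\beta t\begin{bmatrix}(\nabla^2 f(x)+\rho A^*A)\dot x\\ 0\end{bmatrix}-(\gamma t+r)\begin{bmatrix}\nabla F(x)\\0\end{bmatrix}-(\gamma t+r)\,\mathcal C,$$
where $\mathcal C$ is the bilinear coupling vector.

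Since $\xi=1/\theta$, the vector $t\dot s+\xi(s-s^*)$ equals $\frac1\theta(\theta t\dot x+x-x^*,\ \theta t\dot y+y-y^*)$. Hence $\langle t\dot s+\xi(s-s^*),\mathcal C\rangle=0$ by the same skew-symmetry argument as in Lemma \ref{Lem: W_bound_AVD}.

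The remaining pieces of $\dot W_1$ are:
\begin{itemize}
\item $-(\alpha-1-\xi)t\|\dot s\|^2-\xi(\alpha-1-\xi)\langle s-s^*,\dot s\rangle$;
\item $-\beta t^2\langle(\nabla^2 f+\rho A^*A)\dot x,\dot x\rangle-\beta\xi t\langle(\nabla^2 f+\rho A^*A)\dot x,x-x^*\rangle$;
\item $-t(\gamma t+r)\langle\nabla F,\dot x\rangle-\xi(\gamma t+r)\langle\nabla F,x-x^*\rangle$.
\end{itemize}

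\textbf{Step 2: the other terms and their cancellations.}
\begin{itemize}
\item $\dot W_2=\eta\langle s-s^*,\dot s\rangle$. Because $\eta=\xi(\alpha-1-\xi)$, this exactly cancels the cross term from $\dot W_1$. This is why $\eta$ was chosen this way.
\item $\dot W_3=(2\gamma t+r)F+t(\gamma t+r)\langle\nabla F,\dot x\rangle$, which cancels the velocity–gradient term.
\item $\dot W_4=\beta\xi[\langle\nabla f(x),x-x^*\rangle-(f(x)-f(x^*))]+\beta\xi t\langle\nabla^2 f(x)\dot x,x-x^*\rangle$.
\item $\dot W_5=\frac{\beta\xi\rho}{2}\|Ax-b\|^2+\beta\xi\rho t\langle A(x-x^*),A\dot x\rangle$, where I used $Ax^*=b$.
\end{itemize}
The last terms of $\dot W_4$ and $\dot W_5$ together cancel the mixed Hessian term $-\beta\xi t\langle(\nabla^2 f+\rho A^*A)\dot x,x-x^*\rangle$ from $\dot W_1$. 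This is the role of the $\frac{\beta\xi\rho t}{2}\|Ax-b\|^2$ term in $W$.

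\textbf{Step 3: the augmented analogue of Lemma \ref{Lem: dF}.} This is the only genuinely new ingredient. Using $Ax^*=b$, I expect
$$\langle\nabla F(x),x-x^*\rangle=F(x)+[\langle\nabla f(x),x-x^*\rangle-(f(x)-f(x^*))]+\tfrac{\rho}{2}\|Ax-b\|^2.$$
The derivation is that $\langle A^*y^*,x-x^*\rangle=\langle y^*,Ax-b\rangle$ and $\rho\langle A^*(Ax-b),x-x^*\rangle=\rho\|Ax-b\|^2$, one half of which is absorbed into $F$.

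\textbf{Step 4: collect coefficients.} Substituting Step 3 into the surviving term $-\xi(\gamma t+r)\langle\nabla F,x-x^*\rangle$ gives:
\begin{itemize}
\item coefficient of $F$: $(2\gamma t+r)-\xi(\gamma t+r)=-[(\xi-2)\gamma t+(\xi-1)r]$;
\item coefficient of the Bregman-type bracket: $-\xi(\gamma t+r-\beta)$;
\item coefficient of $\|Ax-b\|^2$: $-\frac{\rho\xi}{2}(\gamma t+r-\beta)$.
\end{itemize}
Together with $-(\alpha-\xi-1)t\|\dot s\|^2$ and the Hessian quadratic form, these are exactly the terms in the claimed identity.

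The main obstacle is bookkeeping rather than ideas. The key points are correctly accounting for the $\rho A^*A$ part of the Hessian damping (which lives only in the $x$-block), and checking that $W_5$ cancels it.
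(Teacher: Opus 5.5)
Your proposal is correct and follows essentially the same route as the paper. It uses the five-term splitting of $W$, the skew-symmetry cancellation of the coupling term via $\xi=1/\theta$, the cancellations supplied by $\eta=\xi(\alpha-1-\xi)$, $W_4$ and $W_5$, and the identity $\langle\nabla F(x),x-x^*\rangle=F(x)+[\langle\nabla f(x),x-x^*\rangle-(f(x)-f(x^*))]+\frac{\rho}{2}\|Ax-b\|^2$ (from $Ax^*=b$) before collecting coefficients.
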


\begin{proof}
Denote $W := \sum_{i=1}^5 W_i$, with $W_i$ $(i=1,2,\cdots,5)$ being the terms in \eqref{E: W} sequentially. We begin by using \eqref{E: primal_dual}, to obtain
\begin{align*}
\dot{W}_1
&= \langle t\dot{s} + \xi(s-s^*), t\ddot{s} + (\xi + 1)\dot{s} \rangle \\
&= \langle t\dot{s} + \xi(s-s^*), -(\alpha-\xi-1)\dot{s} \rangle  
  - (\gamma t + r) \left\langle t\dot{x} + \xi(x-x^*), \nabla F(x) \right\rangle \\
&\quad - (\gamma t + r) \left\langle t\dot{s} + \xi(s-s^*), \begin{bmatrix}
 A^*\left( \theta t \dot{y} + (y - y^*) \right)\\
 -A \left( \theta t \dot{x} + (x-x^*) \right)
 \end{bmatrix} \right\rangle \\
&\quad - \beta t \left\langle t\dot{x} + \xi(x-x^*), \left( \nabla^2 f(x) + \rho A^* A \right)\dot{x} \right\rangle.
\end{align*}
Since $\xi = \frac{1}{\theta}$, we have
\begin{align*}
&\quad \left\langle t\dot{s} + \xi(s-s^*), \begin{bmatrix}
A^*\big( \theta t \dot{y} + (y - y^*) \big)\\
 -A \left( \theta t \dot{x} + (x-x^*) \right)
 \end{bmatrix} \right\rangle \\ 
 &= \frac{1}{\theta}\left\langle \begin{bmatrix}
 \theta t \dot{x} + x - x^*\\
\theta t \dot{y} + y-y^* 
 \end{bmatrix}, \begin{bmatrix}
 A^*\big( \theta t \dot{y} + y - y^* \big)\\
 -A \left( \theta t \dot{x} + x-x^* \right)
 \end{bmatrix} \right\rangle=0.
\end{align*}
As a result, 
\begin{align*}
\dot{W}_1
&= -(\alpha-\xi-1)t \| \dot{s} \|^2
   -\xi(\alpha-\xi-1) \langle s-s^*, \dot{s} \rangle \\ 
&\quad -t(\gamma t + r) \langle \nabla F(x), \dot{x} \rangle  
 -(\gamma t + r)\xi \langle \nabla F(x), x-x^* \rangle \\
&\quad -\beta t^2 \left\langle \left( \nabla^2 f(x) + \rho A^* A \right)\dot{x}, \dot{x} \right\rangle 
 - \beta\xi t \left\langle \nabla^2 f(x)\dot{x}, x-x^* \right\rangle  
 - \beta\xi \rho t \left\langle A^* A\dot{x}, x-x^* \right\rangle.
\end{align*}
Likewise, we obtain
\begin{align*}
\dot{W}_2
&= \eta \langle s-s^*, \dot{s} \rangle, \\
\dot{W}_3
&= t(\gamma t + r) \langle \nabla F(x), \dot{x} \rangle
  + (2\gamma t + r) F(x), \\
\dot{W}_4 
&= \beta\xi t \left\langle \nabla^2 f(x) \dot{x}, x-x^* \right\rangle  
  + \beta\xi \left[ \langle \nabla f(x), x-x^* \rangle - \left( f(x) - f(x^*) \right) \right], \\
\dot{W}_5 
&= \beta\xi \rho t \langle Ax-b, A\dot{x} \rangle + \frac{\beta\xi \rho}{2}\| Ax-b \|^2 
= \beta\xi \rho t \langle x-x^*, A^*A\dot{x} \rangle 
  + \frac{\beta\xi \rho}{2}\| Ax-b \|^2.
\end{align*}
Summing up these equations, and recalling that $\eta = \xi(\alpha-\xi-1)$, it follows that
\begin{equation}\label{E: W_dev_temp}
\begin{aligned}
\dot{W} 
&= -(\alpha-\xi-1)t \| \dot{s} \|^2
   -\beta t^2 \left\langle \left( \nabla^2 f(x) + \rho A^* A \right)\dot{x}, \dot{x} \right\rangle  
   + \frac{\beta\xi \rho}{2}\| Ax-b \|^2 \\
&\quad -(\gamma t + r)\xi \langle \nabla F(x), x-x^* \rangle 
   + (2\gamma t + r) F(x) 
   + \beta\xi \left[ \langle \nabla f(x), x-x^* \rangle - \left( f(x) - f(x^*) \right) \right].
\end{aligned}
\end{equation}
With $Ax^* = b$ and \eqref{E: F_ALM} in mind, we have
\begin{align*}
\langle \nabla F(x), x-x^* \rangle 
&= \left\langle \nabla f(x) + A^* y^* + \rho A^*(Ax-b), x-x^* \right\rangle \\
&= \langle \nabla f(x), x-x^* \rangle + \langle y^*, Ax-b \rangle + \rho\| Ax-b\|^2 \\
&= \left[ \langle \nabla f(x), x-x^* \rangle - \left( f(x) - f(x^*) \right) \right] 
  + F(x) + \frac{\rho}{2}\| Ax-b\|^2.
\end{align*}
We conclude by substituting this into \eqref{E: W_dev_temp}.
\end{proof}

\begin{remark}\label{Rem: epsilon_dev_bound}
Since $f$ is convex, we have 
$$ \left\langle \big( \nabla^2 f(x) + \rho A^* A \big)\dot{x}, \dot{x} \right\rangle 
= \left\langle \nabla^2 f(x) \dot{x}, \dot{x} \right\rangle + \rho \| A\dot{x} \|^2 \ge 0.$$
If $\alpha\ge 3$, $r\ge\beta$ and $\frac{1}{\alpha-1}\le \theta \le \frac{1}{2}$, then $W(t)$ is nonincreasing, whence $W(t)\le W(t_0)$ for all $t\ge t_0$.
\end{remark}

Using Lemma \ref{Lem: epsilon_dev}, we can establish the boundedness of the trajectories of \eqref{inertial_primal_dual_system}, and the $\mathcal{O}(\frac{1}{t})$ decay of the velocity, as in Subsection \ref{Subsec: convergence_analysis_PD_AVD-H}. To avoid repetition, we only state the result here and leave the proof to the reader.

\begin{theorem}
Let $(x,y):[t_0,\infty)\to\X\times\Y$ be a solution of the system \eqref{inertial_primal_dual_system}, where
$$\alpha\ge 3,\quad r\ge \beta,\quad\text{and}\quad \frac{1}{\alpha-1}\le\theta\le \frac{1}{2}.$$
Then, the following hold:
\begin{enumerate}[label=(\roman*), itemsep=0.7em, leftmargin=12mm]
\item The trajectory is bounded.

\item The velocity satisfies $\| \dot{x}(t) \|^2 + \| \dot{y}(t) \|^2   = \mathcal{O}\left( \frac{1}{t^2} \right).$ 
\end{enumerate}
\end{theorem}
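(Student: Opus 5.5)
We follow the argument of Theorem \ref{T:convex}, now using the energy \eqref{E: W} with $\xi=\frac{1}{\theta}$. The assumption $\frac{1}{\alpha-1}\le\theta\le\frac12$ gives $2\le\xi\le\alpha-1$, so $\eta=\xi(\alpha-1-\xi)\ge 0$ and $W$ is well defined and nonnegative. By Lemma \ref{Lem: epsilon_dev} and Remark \ref{Rem: epsilon_dev_bound}, every term in $\dot W$ is nonpositive:
\begin{itemize}
\item $\alpha-\xi-1\ge0$;
\item $\nabla^2 f+\rho A^*A\succeq 0$;
\item $\gamma t+r-\beta\ge 0$, since $r\ge\beta$;
\item $(\xi-2)\gamma t+(\xi-1)r\ge 0$;
\item $F\ge0$ and the Bregman-type bracket is nonnegative.
\end{itemize}
Hence $W(t)\le W(t_0)$ for all $t\ge t_0$. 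Since every summand of $W$ is nonnegative, we get
$$\|t\dot s+\xi(s-s^*)\|^2\le 2W(t_0).$$

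For (i), expand the square and drop $t^2\|\dot s\|^2\ge0$:
$$\xi^2\|s-s^*\|^2+2\xi t\langle s-s^*,\dot s\rangle\le 2W(t_0).$$
Dividing by $\xi$, this reads
$$\frac{d}{dt}\bigl[t^{\xi}\|s-s^*\|^2\bigr]\le \frac{2W(t_0)}{\xi}\,t^{\xi-1}.$$
Integrating over $[t_0,t]$ gives
$$t^\xi\|s-s^*\|^2\le t_0^\xi\|s_0-s^*\|^2+\frac{2W(t_0)}{\xi^2}\bigl(t^\xi-t_0^\xi\bigr).$$
Therefore $\|s(t)-s^*\|^2\le \|s_0-s^*\|^2+\frac{2W(t_0)}{\xi^2}$ is uniformly bounded, which proves (i).

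Alternatively, when $\xi<\alpha-1$ one has $\eta>0$, and the term $\frac{\eta}{2}\|s-s^*\|^2\le W(t_0)$ gives boundedness directly. The integration argument above is still needed for the endpoint $\xi=\alpha-1$, i.e. $\theta=\frac{1}{\alpha-1}$, where $\eta=0$.

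For (ii), use the triangle inequality:
$$t\|\dot s\|\le \|t\dot s+\xi(s-s^*)\|+\xi\|s-s^*\|\le \sqrt{2W(t_0)}+\xi\sqrt{\|s_0-s^*\|^2+\tfrac{2W(t_0)}{\xi^2}}.$$
Squaring shows that $\|\dot x\|^2+\|\dot y\|^2=\mathcal O(1/t^2)$.

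No step is a genuine obstacle. The only point needing care is the sign check of every term of $\dot W$ over the full range of $\theta$, together with the degenerate endpoint $\eta=0$ handled by the integration argument.
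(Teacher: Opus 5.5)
Your proposal is correct and follows the route the paper intends. The paper leaves this proof to the reader, pointing to the proof of Theorem \ref{T:convex} combined with Lemma \ref{Lem: epsilon_dev} and Remark \ref{Rem: epsilon_dev_bound}: monotonicity of $W$ with $\xi=1/\theta\in[2,\alpha-1]$, then the $t^{\xi}$ integrating-factor bound on $\|s-s^*\|$ and the triangle inequality for $t\|\dot s\|$. Your remark that $\eta>0$ gives boundedness directly when $\theta>\frac{1}{\alpha-1}$ is a valid shortcut, and the integration argument correctly covers the endpoint $\eta=0$.
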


\begin{remark}
Notice that the trajectory boundedness and the $\mathcal{O}\left( \frac{1}{t} \right)$ decay of the velocity still hold when $\alpha=3$, which complements the result in \cite{Zeng_2023,Radu_2021}.
\end{remark}

\begin{remark}
    As was discussed in Remarks \ref{R:convergent_gradient} and \ref{R:convergent_gradient_bis}, if $f$ is (locally) smooth, then $\|\nabla f(x) + A^* y^* + \rho A^*(Ax^*-b)\|^2=\mathcal{O}(\frac{1}{t})$.
\end{remark}

\subsection{Convergence Rate Analysis}
In this subsection, we prove the convergence rates for the primal dual gap, the feasibility measure and the function values of the system \eqref{inertial_primal_dual_system}.

Given $\ell\in\Y$, define the energy function $E_\ell:[t_0,\infty)\to\R$, by
\begin{equation}\label{E: epsilon_bar}
\begin{aligned}
E_\ell
&= \frac{1}{2}\| t\dot{s}(t) + \xi(s(t)-s_\ell^*) \|^2 + \frac{\eta}{2}\| s(t)-s_\ell^* \|^2
  + t(\gamma t + r) \big[ \mathcal{L}_\rho\left(x(t),\ell\right) - \mathcal{L}_\rho(x^*,\ell)\big] \\
&\quad + \beta\xi t \left[ \langle \nabla f(x(t)), x(t)-x^* \rangle - \left( f(x(t)) - f(x^*) \right) \right] 
  + \frac{\beta\xi \rho t}{2}\| Ax(t)-b \|^2,
\end{aligned}
\end{equation}
where $s_\ell^*=(x^*,\ell)$. Clearly, we have $E_{y^*}=W$.

\begin{remark}
The term $\mathcal{L}_\rho\left(x,\ell\right) - \mathcal{L}_\rho(x^*,\ell)$ in $E_\ell$ is not necessarily nonnegative, since 
\begin{equation}\label{E: Lagrangian_bar}
\mathcal{L}_\rho\left(x,\ell\right) - \mathcal{L}_\rho(x^*,\ell) = F(x) 
  + \left\langle \ell - y^*, Ax-b \right\rangle,
\end{equation}
and $\left\langle \ell - y^*, Ax-b \right\rangle$ can be negative. The rest of the terms in $E_\ell$ are clearly nonnegative.
\end{remark}

Before we use $E_\ell$ for the convergence proof, we derive some estimations, which will facilitate the analysis.

\begin{lemma}\label{Lem: epsilon_bar}
Let $(x,y):[t_0,\infty)\to\X\times\Y$ be a solution of \eqref{inertial_primal_dual_system}. For $\ell\in\Y$, let $E_\ell:[t_0,\infty)\to\R$ be defined by \eqref{E: epsilon_bar}. Then, we have
\begin{equation*}
E_\ell \le 2W + (\xi^2 + \eta)\| \ell - y^*\|^2 + t(\gamma t + r)\left\langle \ell - y^*, Ax(t)-b \right\rangle,
\end{equation*}
for every $t\ge t_0$. In particular,
$$\sup_{\|\ell-y^*\|\le 1} E_\ell(t_0) \le C_0:=2W(t_0) + \xi^2 + \eta + t_0(\gamma t_0 + r) \| Ax_0-b \|.$$
\end{lemma}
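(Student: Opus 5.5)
Our plan is to compare $E_\ell$ with $W=E_{y^*}$ term by term; only the first two terms and the Lagrangian-gap term depend on $\ell$. Since $s-s_\ell^* = (s-s^*) + (0,\,y^*-\ell)$, we will use the elementary inequality $\|a+b\|^2\le 2\|a\|^2+2\|b\|^2$ on the first two terms. This gives
\begin{align*}
\tfrac12\| t\dot{s} + \xi(s-s_\ell^*) \|^2 &\le \| t\dot{s} + \xi(s-s^*) \|^2 + \xi^2\|\ell-y^*\|^2,\\
\tfrac{\eta}{2}\| s-s_\ell^* \|^2 &\le \eta\| s-s^* \|^2 + \eta\|\ell-y^*\|^2.
\end{align*}
Adding these, the first two terms of $E_\ell$ are bounded by twice the first two terms of $W$, plus $(\xi^2+\eta)\|\ell-y^*\|^2$.

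For the third term we invoke \eqref{E: Lagrangian_bar}, which gives
\[
t(\gamma t+r)\big[\mathcal{L}_\rho(x,\ell)-\mathcal{L}_\rho(x^*,\ell)\big] = t(\gamma t+r)F(x) + t(\gamma t+r)\langle \ell-y^*, Ax-b\rangle.
\]
The remaining terms of $E_\ell$ coincide with those of $W$. All terms of $W$ are nonnegative: $F\ge 0$, the Bregman-type bracket is nonnegative by convexity, and $\eta\ge0$. Hence each one is bounded by twice itself. Summing all the pieces yields the first inequality of the lemma.

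For the bound on $C_0$ we evaluate the first inequality at $t=t_0$, restricted to $\|\ell-y^*\|\le 1$. Then $(\xi^2+\eta)\|\ell-y^*\|^2\le \xi^2+\eta$. By Cauchy--Schwarz, $\langle \ell-y^*, Ax_0-b\rangle\le \|Ax_0-b\|$, and the coefficient $t_0(\gamma t_0+r)$ is positive. Taking the supremum over $\ell$ gives $C_0$.

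The argument is routine and has no real obstacle. The only care needed is to track the factor $2$ when splitting the norm: it is what produces the coefficients $\xi^2$ and $\eta$ rather than $\xi^2/2$ and $\eta/2$.
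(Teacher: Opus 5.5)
Your proposal is correct and follows the paper's proof essentially verbatim. It uses the same splitting $\|a+b\|^2\le 2\|a\|^2+2\|b\|^2$ on the two $\ell$-dependent quadratic terms, the identity \eqref{E: Lagrangian_bar} for the gap term, and nonnegativity of the remaining terms to absorb them into $2W$; your Cauchy--Schwarz step for $C_0$ just spells out what the paper leaves implicit.
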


\begin{proof}
From \eqref{E: epsilon_bar}, we get
$$\frac{1}{2}\| t\dot{s} + \xi(s-s_\ell^*) \|^2
\le \| t\dot{s} + \xi(s-s^*) \|^2 + \xi^2\| \ell - y^*\|^2,$$
and
$$ \frac{\eta}{2}\| s-s_\ell^* \|^2 
\le \eta\| s-s^* \|^2 + \eta \left\| \ell - y^* \right\|^2. $$
We conclude using \eqref{E: Lagrangian_bar}.
\end{proof}

We are now in a position to prove the main convergence results of this section.

\begin{theorem} \label{T:rates}
Let $(x,y):[t_0,\infty)\to\X\times\Y$ be a solution of the system \eqref{inertial_primal_dual_system}, where
$$ \alpha\ge 3,\quad r\ge\beta \qbox{and} \frac{1}{\alpha-1}\le \theta \le \frac{1}{2}.$$
Let $C_0$ be the constant defined in Lemma \ref{Lem: epsilon_bar}. For every $t\ge t_0$, we have
\begin{equation} \label{E:main_thm}
    0\le \mathcal{L}_\rho\left(x(t),y^*\right) - \mathcal{L}_\rho(x^*,y^*) + \|Ax(t)-b\| \le \frac{2C_0}{t(\gamma t + r)},
\end{equation}
and
$$ -\frac{2C_0 \| y^* \| }{t(\gamma t + r)} \le f\left( x(t) \right) - f(x^*) \le \frac{2C_0 \left(1+\|y^* \|\right)}{t(\gamma t + r)}. $$
\end{theorem}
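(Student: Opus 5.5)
The plan is to show that $E_\ell$ is nonincreasing for every fixed $\ell\in\Y$, and then to choose $\ell$ so as to extract the feasibility term.

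\textbf{Step 1: $E_\ell$ is nonincreasing.} The system \eqref{inertial_primal_dual_system} does not depend on the choice of multiplier used to write it. Replacing $y^*$ by $\ell$ in \eqref{E: primal_dual} only changes $\nabla F$ into $\nabla_x\mathcal{L}_\rho(x,\ell)$, since the term $A^*(y-y^*)$ absorbs the difference. The computation in Lemma \ref{Lem: epsilon_dev} therefore goes through with $s^*$ replaced by $s_\ell^*=(x^*,\ell)$ and $F$ replaced by $F_\ell(x):=\mathcal{L}_\rho(x,\ell)-\mathcal{L}_\rho(x^*,\ell)$. Two facts make this work:
\begin{itemize}
\item the skew-symmetric coupling term still vanishes, because $\xi=1/\theta$ and $Ax^*=b$;
\item the identity $\langle\nabla F_\ell(x),x-x^*\rangle=[\langle\nabla f(x),x-x^*\rangle-(f(x)-f(x^*))]+F_\ell(x)+\frac{\rho}{2}\|Ax-b\|^2$ holds, since the linear term $\langle \ell,Ax-b\rangle$ contributes equally to both sides.
\end{itemize}
This yields the same expression for $\dot E_\ell$ as in Lemma \ref{Lem: epsilon_dev}, with $F$ replaced by $F_\ell$. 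The delicate point is that $F_\ell$ may be negative, so the term $-[(\xi-2)\gamma t+(\xi-1)r]F_\ell$ is not automatically nonpositive. I would handle this by writing $F_\ell=F+\langle \ell-y^*,Ax-b\rangle$. Alternatively, I would argue directly: $E_\ell$ is affine in $\ell$ apart from the quadratic terms in $\ell$, and if $\ell=y(t)$-independent monotonicity fails, I would fall back on the standard argument of \cite{Zeng_2023,He_2026_affine_dynamics}, which uses $F_\ell\ge$ (Bregman-type terms) $-$ nothing. Concretely, $\mathcal L_\rho(x,\ell)-\mathcal L_\rho(x^*,\ell)\ge \langle\nabla_x\mathcal L_\rho(x^*,\ell),x-x^*\rangle=\langle \nabla f(x^*)+A^*\ell,x-x^*\rangle$. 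I expect this sign issue to be the main obstacle. The cleanest resolution is to restrict to $\xi=2$ ($\theta=1/2$), where the $\gamma t$ coefficient vanishes, and to absorb the remaining $r$-term using $r\ge\beta$ together with the nonnegative Bregman and $\|Ax-b\|^2$ terms. The general $\theta$ case would then be treated by the same bookkeeping.

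\textbf{Step 2: the bound \eqref{E:main_thm}.} By Lemma \ref{Lem: epsilon_bar}, $E_\ell(t)\le E_\ell(t_0)\le C_0$ for every $\|\ell-y^*\|\le1$. Dropping the nonnegative terms of $E_\ell$ gives
\[
t(\gamma t+r)\bigl[F(x(t))+\langle \ell-y^*,Ax(t)-b\rangle\bigr]\le C_0 .
\]
Choose $\ell=y^*+\frac{Ax(t)-b}{\|Ax(t)-b\|}$ whenever $Ax(t)\ne b$. This gives $F(x(t))+\|Ax(t)-b\|\le C_0/(t(\gamma t+r))$, which is within the stated factor $2C_0$. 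Since $F(x)=\mathcal L_\rho(x,y^*)-\mathcal L_\rho(x^*,y^*)\ge0$, the lower bound in \eqref{E:main_thm} holds as well.

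\textbf{Step 3: the function values.} From \eqref{E: F_ALM},
\[
f(x)-f(x^*)=F(x)-\langle y^*,Ax-b\rangle-\tfrac{\rho}{2}\|Ax-b\|^2\le F(x)+\|y^*\|\,\|Ax-b\|,
\]
and \eqref{E:main_thm} gives the upper bound with constant $C_0(1+\|y^*\|)$, again within the stated factor $2$. For the lower bound, the saddle inequality $\mathcal L(x,y^*)\ge\mathcal L(x^*,y^*)$ (taking $\rho=0$) gives $f(x)-f(x^*)\ge-\langle y^*,Ax-b\rangle\ge-\|y^*\|\,\|Ax-b\|$. Combining this with the feasibility bound yields $-2C_0\|y^*\|/(t(\gamma t+r))$.
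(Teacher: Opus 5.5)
Your overall architecture (the family $E_\ell$, the choice $\ell=y^*+\frac{Ax(\tau)-b}{\|Ax(\tau)-b\|}$, and Step 3) matches the paper's. However, the claim in Step 1 is false, and Step 2 depends entirely on it.

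You correctly obtain $\dot E_\ell\le -c(t)F_\ell(x)$ with $c(t)=(\xi-2)\gamma t+(\xi-1)r\ge r>0$. Yet $E_\ell$ is genuinely not monotone. Take $\X=\Y=\R$, $A=1$, $b=0$, $f\equiv0$ (so $s^*=0$), $\ell=1$, and data $x(t_0)=-\epsilon$, $\dot x(t_0)=\dot y(t_0)=0$. The exact formula for $\dot E_\ell$ then gives $\dot E_\ell(t_0)=c(t_0)\epsilon-\mathcal O(\epsilon^2)>0$.

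None of your repairs closes this:
\begin{itemize}
\item The convexity bound $F_\ell\ge\langle \ell-y^*,Ax-b\rangle$ has the same sign problem.
\item Setting $\xi=2$ removes only the $\gamma t$ part of $c(t)$, not the $r$ part.
\item Absorbing $c(t)|\langle\ell-y^*,Ax-b\rangle|$ into $\frac{\rho\xi}{2}(\gamma t+r-\beta)\|Ax-b\|^2$ via Young's inequality leaves a remainder of order $c(t)^2/t$. That is $\sim 1/t$ for $\xi=2$, giving only $E_\ell(t)\le C_0+\mathcal O(\log t)$, and $\sim t$ for $\xi>2$.
\end{itemize}

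The missing idea is to control the bad term through $W$ instead of proving monotonicity. Lemma \ref{Lem: epsilon_bar} together with $W(t)\le W(t_0)$ gives, for $\|\ell-y^*\|\le 1$,
\[
E_\ell(t)-t(\gamma t+r)\langle \ell-y^*,Ax(t)-b\rangle\le K:=2W(t_0)+\xi^2+\eta .
\]
Using $F\ge0$ and $c\ge0$, this yields
\[
\dot E_\ell\le -c(t)\langle \ell-y^*,Ax-b\rangle\le \frac{c(t)}{t(\gamma t+r)}\,(K-E_\ell).
\]
Now use the integrating factor $\phi(t)=\frac{t^{\xi-1}}{\gamma t+r}$. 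It satisfies $\phi'/\phi=\frac{c(t)}{t(\gamma t+r)}$, and $\phi'\ge0$ precisely because $\theta\le\frac12$. Hence $\frac{d}{dt}[\phi(E_\ell-K)]\le0$, and therefore $E_\ell(t)\le\max(E_\ell(t_0),K)\le C_0$. (The paper records the looser bound $E_\ell(t_0)+K\le 2C_0$.)

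Only with this bound, applied to $\ell=\ell_\tau$ fixed in advance and evaluated at $t=\tau$, does your Step 2 yield \eqref{E:main_thm}. Your Step 3 is then fine.
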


\begin{proof}
Fix $\tau\ge t_0$, and set
\begin{equation}\label{E: lambda_tau}
\ell_\tau = \left\{
\begin{array}{ccl}
y^* + \frac{Ax(\tau)-b}{\| Ax(\tau)-b \|},& & \text{if }Ax(\tau)-b\neq 0,\\
y^* & & \text{if }Ax(\tau)-b= 0.
\end{array}
\right.
\end{equation}
Since $\|\ell_\tau-y^*\|\le 1$, it follows from Lemma \ref{Lem: epsilon_bar} that 
$$ E_{\ell_\tau}( t_0) \le C_0,$$
and
\begin{equation}\label{E: E_tau_bound}
\begin{aligned}
E_{\ell_\tau}(t) & \le 2W(t) + \xi^2 + \eta + t(\gamma t + r)\left\langle \ell_\tau - y^*, Ax(t)-b \right\rangle \\
 & \le 2W(t_0) + \xi^2 + \eta + t(\gamma t + r)\left\langle \ell_\tau - y^*, Ax(t)-b \right\rangle,    
\end{aligned}
\end{equation}
in view of Remark \ref{Rem: epsilon_dev_bound}. On the other hand, setting $\xi = \frac{1}{\theta}$, and proceeding as in the proof of Lemma \ref{Lem: epsilon_dev}, we obtain
\begin{align*}
\dot E_{\ell_\tau}(t)  
&\le -[ (\xi-2)\gamma t + (\xi-1)r ] \left[ \mathcal{L}_\rho\left(x(t),\ell_\tau\right) - \mathcal{L}_\rho(x^*,\ell_\tau) \right] \\
&\le -[ (\xi-2)\gamma t + (\xi-1)r ] \left\langle \ell_\tau - y^*, Ax(t)-b \right\rangle,
\end{align*}
where the last inequality is due to \eqref{E: Lagrangian_bar}. Multiplying both sides of this inequality by $\frac{t^{\xi-1}}{\gamma t+r}$ gives
\begin{align*}
\left(\tfrac{t^{\xi-1}}{\gamma t+r}\right)\dot E_{\ell_\tau}(t) 
& \le -\tfrac{t^{\xi-1}}{\gamma t+r}[ (\xi-2)\gamma t + (\xi-1)r ] \left\langle \ell_\tau - y^*, Ax(t)-b \right\rangle \\
& = -t(\gamma t+r) \left\langle \ell_\tau - y^*, Ax(t)-b \right\rangle \frac{d}{dt}\left(\tfrac{t^{\xi-1}}{\gamma t+r}\right).
\end{align*}
It follows that
\begin{align*}
\frac{d}{dt}\left[\left(\tfrac{t^{\xi-1}}{\gamma t+r}\right)E_{\ell_\tau}(t)\right] 
&= E_{\ell_\tau}(t)\frac{d}{dt}\left(\tfrac{t^{\xi-1}}{\gamma t+r}\right) + \left(\tfrac{t^{\xi-1}}{\gamma t+r}\right)\dot E_{\ell_\tau}(t)  \\
&\le \left[E_{\ell_\tau}(t)-t(\gamma t+r) \left\langle \ell_\tau - y^*, Ax(t)-b \right\rangle\right]\frac{d}{dt}\left(\tfrac{t^{\xi-1}}{\gamma t+r}\right) \\
&\le \left[2W(t_0) + \xi^2 + \eta \right] \frac{d}{dt}\left(\tfrac{t^{\xi-1}}{\gamma t+r}\right),
\end{align*}
by applying \eqref{E: E_tau_bound}. Integrating from $t_0$ to $t$, we obtain
\begin{align*}
\left(\tfrac{t^{\xi-1}}{\gamma t+r}\right)E_{\ell_\tau}(t)-\left(\tfrac{t_0^{\xi-1}}{\gamma t_0+r}\right)E_{\ell_\tau}(t_0) 
&\le  \left[2W(t_0) + \xi^2 + \eta \right] \left(\tfrac{t^{\xi-1}}{\gamma t+r}-\tfrac{t_0^{\xi-1}}{\gamma t_0+r}\right),
\end{align*}
which implies that
$$E_{\ell_\tau}(t) \le E_{\ell_\tau}(t_0) + \left[2W(t_0) + \xi^2 + \eta \right] \le 2C_0.$$
According to the definition of $E_{\ell}$ in \eqref{E: epsilon_bar}, we have
$$ \mathcal{L}_\rho\left(x(t),\ell_\tau\right) - \mathcal{L}_\rho(x^*,\ell_\tau) \le \frac{2C_0}{t(\gamma t + r)},$$
for every $t\ge t_0$. This, combined with \eqref{E: Lagrangian_bar} and \eqref{E: lambda_tau}, results in
$$ \mathcal{L}_\rho\left(x(\tau),y^*\right) - \mathcal{L}_\rho(x^*,y^*) + \| Ax(\tau)-b \|
\le \frac{2C_0}{\tau(\gamma \tau + r)}. $$
Since $\tau\ge t_0$ was arbitrarily chosen, we conclude that
$$ \mathcal{L}_\rho\left(x(t),y^*\right) - \mathcal{L}_\rho(x^*,y^*) + \| Ax(t)-b \| 
\le \frac{2C_0}{t(\gamma t + r)}, $$
for every $t\ge t_0$, which is \eqref{E:main_thm}. Now, \eqref{E:main_thm} also shows that
$$ \| Ax(t)-b \| \le \frac{2C_0}{t(\gamma t + r)}$$
and
$$f\left( x(t) \right) - f(x^*) + \left\langle y^*, Ax(t) - b \right\rangle 
\le \frac{2C_0}{t(\gamma t + r)}.$$
Hence,
$$ f\left( x(t) \right) - f(x^*)  
\le \frac{2C_0}{t(\gamma t + r)} + \|y^* \| \left\| Ax(t) - b \right\| 
\le \frac{2C_0 \left(1+\|y^* \|\right)}{t(\gamma t + r)}. $$
On the other hand, by the convexity of $f$ and the optimality conditions, we have
\begin{align*}
f\left( x(t) \right) - f(x^*) 
&\ge \langle \nabla f(x^*), x(t) - x^* \rangle 
= - \langle A^* y^*, x(t) - x^* \rangle 
= - \langle y^*, Ax(t) - b \rangle.
\end{align*}
This results in
$$ f\left( x(t) \right) - f(x^*) 
\ge - \| y^* \|  \left\| Ax(t) - b \right\| 
\ge -\frac{2C_0 \| y^* \| }{t(\gamma t + r)},
$$
and completes the proof.
\end{proof}

\begin{remark}
The primal dual gap, the feasibility measure and the function values all enjoy a fast convergence rate $\mathcal{O}\left( \frac{1}{t^2} \right)$. More precisely, as $t\to\infty$, we have
\begin{align*}
\mathcal{L}_\rho\left(x(t),y^*\right) - \mathcal{L}_\rho(x^*,y^*)
&= \mathcal{O}\left( \tfrac{1}{\gamma t^2} \right),\quad
\|Ax(t)-b\|
= \mathcal{O}\left( \tfrac{1}{\gamma t^2} \right),\\
|f\left( x(t) \right) - f(x^*) |
&= \mathcal{O}\left( \tfrac{1}{\gamma t^2} \right).
\end{align*}
Setting $\gamma=1$, these rates match those in \cite{Radu_2021}. We also observe the time scaling property, as in \cite{Attouch_2022_ADMM,Hulett_2023}. While perserving fast convergence properties, the generated trajectories by \eqref{inertial_primal_dual_system} have less oscillations, thanks to the Hessian-driven damping $\beta\nabla^2_{xx}\mathcal{L}_\rho \left( x, y \right) \dot{x}$.  
\end{remark}

\subsection{Observing the stabilization effect}\label{Sec: example}
In this subsection, we consider the linearly constrained quadratic programming (QP) problem:
\begin{equation*}
\begin{aligned}
\min_{x\in\R^n}&\quad f(x) = \frac{1}{2}\langle x, P x\rangle , \\
\text{subject to}&\quad Ax=b,
\end{aligned}
\end{equation*}
where $P\in\R^{n\times n}$ is positive semidefinite, $A\in\R^{m\times n}$ and $b\in\R^m$. For simulation, we set 
\begin{equation*}
P = \begin{bmatrix}
10^{-2} & 0 & 0 \\
0 & 10 & 0\\
0 & 0 & 10^{2}
\end{bmatrix},\ 
A = \begin{bmatrix}
1 & 1 & 1
\end{bmatrix},\  
b = 1.
\end{equation*}
We also set $\alpha=3$, $\gamma=1$, $r=0.4$, $\rho=2$, $\theta = 0.5$, $x_0 = (3,3,3)$, $\dot{x}_0 = (0,0,0)$, $y_0 = 0$ and $\dot{y}_0 = 0$. 

We illustrate the effect of the Hessian-driven damping on the convergence behavior of \eqref{inertial_primal_dual_system}, by considering three cases: $\beta = 0$, $\beta = 0.1$ and $\beta = 0.4$. The results are shown in Figure \ref{fig: primal_dual_fx}, where the trajectory oscillations are alleviated with an increasing $\beta$. This confirms the role of the Hessian-driven damping in reducing the oscillations.  

\begin{figure}[htb!]
\centering
\includegraphics[width = 3.5in]{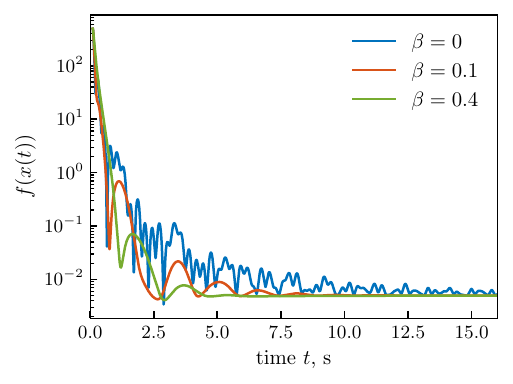}
\caption{With different values of $\beta$, the function values $f\big(x(t)\big)$ with respect to the time $t$.}
\label{fig: primal_dual_fx}
\end{figure}

\section{Conclusions}\label{Sec: conclusions}
In this paper, we propose two inertial primal dual dynamical systems for smooth and bilinearly coupled saddle point problems, and develop an inertial system for solving the affinely constrained convex optimization problem. These systems distinguish themselves by including a Hessian-driven damping term. For these three systems, we establish the trajectory boundedness, decaying property of the velocity, and fast convergence rates for the primal dual gap, especially in the strongly convex case, with or without knowledge of the strong convexity parameters.

\vskip 15pt

{\small
\noindent{\bf Acknowledgements.} This work was partially funded by the China Scholarship Council~202208520010, and also benefited from the support of the FMJH Program Gaspard Monge for optimization and operations research and their interactions with data science. \\

\noindent{\bf Compliance with Ethical Standards.} Both authors contributed equally to this research, and have no conflict of interest to declare. This research involved no human participants or animals, and did not make use of external data.
}

\bibliography{myrefs}

\end{document}